\documentclass[11pt, notitlepage]{article}
\usepackage{amssymb,amsmath,comment}
\usepackage{epsfig,enumerate,amsmath,amsfonts,amssymb}
\usepackage{indentfirst}
\usepackage{pstricks}
\usepackage{setspace}
\usepackage{latexsym}
\usepackage{bm}
\usepackage{amsthm}
\usepackage{amsmath}
\usepackage{parskip}
\usepackage{fixmath}
\catcode`\@=11 \@addtoreset{equation}{section}
\def\thesection{\arabic{section}}

\def\theequation{\thesection.\arabic{equation}}
\catcode`\@=12
\usepackage{colortbl}
\usepackage{a4wide}

\newcommand{\ds} {\displaystyle}
\newcommand{\e}{\varepsilon}
\newcommand{\pa} {\partial}
\newcommand{\al} {\alpha}
\newcommand{\ba} {\beta}
\newcommand{\de} {\delta}

\newcommand{\Om} {\Omega}
\newcommand{\ra} {\rightarrow}
\newcommand{\rp} {\rightharpoonup}

\newcommand{\noi} {\noindent}
\newcommand{\na} {\nabla_{\mathbb{H}}}

\newcommand{\mc} {\mathcal}
\newcommand{\mf} {\mathfrak}

\newcommand{\ld} {\langle}
\newcommand{\rd} {\rangle}

\setcounter{page}{1}\pagestyle{myheadings}\markboth{\small xyz} {\small }
\usepackage[all]{xy}
\catcode`\@=11
\def\theequation{\@arabic{\c@section}.\@arabic{\c@equation}}
\catcode`\@=12

\def\R{{I\!\!R}}
\def\H{{I\!\!H}}

\def\QED{\hfill {$\square$}\goodbreak \medskip}
\def\H{\mathbb{H}^N}
\def\HH{\mathbb{H}^{2N}}
\def\R{\mathbb{R}^N}
\def\J{\mathcal{J}_a}
\newtheorem{Theorem}{Theorem}[section]
\newtheorem{Lemma}[Theorem]{Lemma}
\newtheorem{Proposition}[Theorem]{Proposition}

\newtheorem{Remark}[Theorem]{Remark}

\begin{document}
\vspace{0.01in}

\title
{Variational framework and Lewy-Stampacchia type estimates  for nonlocal operators on Heisenberg   group}

\author{ Divya Goel$^{\,1}$\footnote{e-mail: {\tt divyagoel2511@gmail.com, divya.goel@campus.technion.ac.il}}, \ Vicen\c tiu D. R\u adulescu$^{\,{2,3}}$\footnote{e-mail: {\tt vicentiu.radulescu@imar.ro}} \
	and \  K. Sreenadh$^{\,4}$\footnote{
		e-mail: {\tt sreenadh@maths.iitd.ac.in}} \\ \\ $^1\,${\small Department of Mathematics, Technion - Israel Institute of Technology,}\\
	{\small	Haifa, Israel }
	\\ $^2\,${\small Faculty of Applied Mathematics, AGH University of Science and Technology,}\\ {\small al. Mickiewicza 30, 30-059 Krak\'ow, Poland}\\  $^3\,${\small  Department of Mathematics, University of Craiova, 200585 Craiova, Romania}\\  $^4\,${\small Department of Mathematics, Indian Institute of Technology Delhi,}\\
{\small	Hauz Khaz, New Delhi-110016, India }}

\date{}

\maketitle

\begin{abstract}
   The aim of this article is to derive some Lewy-Stampacchia estimates and existence of solutions for equations driven by a nonlocal integrodifferential operator on Heisenberg group.
\medskip

\noi \textbf{Key words}: Variational inequalities,  Integrodifferential operators, Heisenberg group, Mountain Pass theorem

\medskip

\noi \textit{2010 Mathematics Subject Classification:  35R03, 35R11, 49J40, 35H20.}

\end{abstract}
\newpage
\section{Introduction}
 The Heisenberg Group $\H = \mathbb{R}^N\times\mathbb{R}^N\times \mathbb{R},\; N\in \mathbb{N}$ is a  Lie group, endowed with the  following group law
\begin{align*}
	(x,y,t)\cdot (x^\prime,y^\prime,t^\prime	)= (x+x^\prime,y+y^\prime, t+t^\prime+2( \ld x^\prime, y \rd - \ld x,y^\prime\rd  ))
\end{align*}
where $x,y, x^\prime, y^\prime \in \mathbb{R}^N$. The corresponding Lie algebra of left invariant vector fields  is generated by  the following vector  fields
$$
X_j= \frac{\pa}{\pa x_j}+ 2y_j\frac{\pa}{\pa t},  \;
Y_j= \frac{\pa}{\pa y_j}- 2x_j\frac{\pa}{\pa t} , \;
T= \frac{\pa}{\pa t}.$$
It is straightforward to check that for all $ j,k =1,2,\cdots, N,$
\begin{equation}\label{hg33}
	[X_j,X_k]=[Y_j,Y_k]= [X_j, \frac{\pa}{\pa t}]= [Y_j, \frac{\pa}{\pa t}]=0,
	\text{ and } [X_j, Y_k]= -4\de_{jk}\frac{\pa}{\pa t}.
\end{equation}
These relations \eqref{hg33} establish the Heisenberg's canonical commutation relations of quantum mechanics for position and momentum, hence the name Heisenberg group \cite{hei}.

 We define the left translations on $\H$ by
\begin{align*}
	\tau_\xi:\H \ra \H \qquad \tau_\xi(\xi^\prime)= \xi \cdot\xi^\prime
\end{align*}
and the  natural $\mathbb{H}$-dilations $\de_\theta: \H \ra \H $ by
$$
\de_\theta(x,y,t)= (\theta x, \theta y, \theta^2 t)
$$
for $\theta>0$.
The Jacobian determinant of $\de_\theta$ is $\theta^Q$. The number $Q=2N+2$ is called the homogeneous dimension of $\H$ and it portrays a role equivalent to the topological dimension in the Euclidean space.  We denote the homogeneous norm on $\H$ by
\begin{align*}
	|\xi|= |(x,y,t)|= (t^2+ (x^2+y^2)^2)^{1/4}, \text{ for all } \xi = (x,y,t) \in \H.
\end{align*}

We shall denote $B_r(\xi)$, the ball of center $\xi$ and radius $r$. It implies $\tau_\xi(B_r(0))= B_r(\xi)$ and $\de_r(B_1(0))= B_r(0)$.

Recently, many researchers explored the nonlocal operators on Heisenberg group.   In \cite{frank}, Frank, Gonaz\'alez, Monticelli, and Tan showed that the conformally invariant fractional powers of the sub-Laplacian on the Heisenberg group are given in terms of the scattering operator for an extension problem to the Siegel upper halfspace. Remarkably, this extension problem is different from the one studied, among others, by Caffarelli and Silvestre \cite{caffe}. In \cite{guidi}, Guidi, Maalaoui, and Martino studied the Palais-Smale sequence of the conformally invariant fractional powers of the sub-Laplacian  and proved the existence of solutions.  In \cite{liu},Liu, Wang, and Xiao discussed the nonegative solutions  of a fractional sub-Laplacian
differential inequality on Heisenberg group. In \cite{ele}, Cinti and Tan  established a Liouville-type theorem for a subcritical nonlinear problem, involving a fractional power of the sub-Laplacian in the Heisenberg group. To prove their result authors used the local realization of fractional CR covariant operators, which can be constructed as the Dirichlet-to-Neumann operator of a degenerate elliptic equation  as established in \cite{frank}. 
Nonlocal equations with Convolution type nonlinearities had been discussed  by Goel and Sreenadh \cite{DS}. Their authors established the Brezis-Norenberh type result for the critical problem.  But there is no article which deals a general integro-differntail operator over $\H$. In this article we consider the following integro-differential operator
\begin{align*}
	\mf L_{\mc K} u(\xi)= \frac{1}{2} \int_{ \H} (u(\xi\eta)+u(\xi\eta^{-1})-2u(\xi))\mc K(\eta)~d\eta
\end{align*}
where  $\mc K: \H\setminus\{0\} \ra (0,+\infty)$ be a function with following properties
\begin{equation}\label{hf18}
	\begin{aligned}
		& \theta\mc K \in L^1(\H) \text{ where } \theta(\xi)= \min\{1,|\xi|^2 \}\\
		\text{  there exists }& \mu >0 \text{  such that for all   } \xi \in \H \setminus\{ 0\}, ~\mc K(\xi)\geq \mu |\xi|^{Q+2s},~Q>2s\\
		& \mc K(\xi)=\mc K(\xi^{-1}) \text{ for all } \xi \in \H\setminus\{0\}.
	\end{aligned}
\end{equation}
Employing \eqref{hf18}, one can easily prove that
\begin{align}\label{hf40}
	\mf L_{\mc K} u(\xi)=  \int_{\H} (u(\eta)-u(\xi))\mc K(\eta^{-1}\xi)~d\eta
\end{align}
  In \cite{thang}, Roncal and Thangavelu proved the  \eqref{hf40}  with $\mc K= |\xi|^{Q+2s}$  is the integral  representation  of  fractional sub-Laplacian  on the Heisenberg group.


In case of $\R$, $\mf L_{\mc K}$ is defined as
\begin{align*}
	\mf L_{\{\mc K,\R\}} u(x)= \frac{1}{2} \int_{ \R} (u(x+y)+u(x-y)-2u(x))\mc K(y)~dy
\end{align*}
where  $\mc K: \R\setminus\{0\} \ra (0,+\infty)$ be a function with following properties
  \begin{equation*}
  	\begin{aligned}
  		& \theta\mc K \in L^1(\R) \text{ where } \theta(x)= \min\{1,|x|^2 \}\\
  		\text{  there exists }& \mu >0 \text{  such that for all   } x \in \R \setminus\{ 0\}, ~\mc K(x)\geq \mu |x|^{N+2s}\\
  		& \mc K(x)=\mc K(-x) \text{ for all } x \in \R\setminus\{0\}.
  	\end{aligned}
  \end{equation*}
In recent decade, the subject of  nonlocal elliptic equations involving $\mf L_{\{\mc K,\R\}}$
has gained more popularity because of many  applications such as continuum mechanics,  game theory and phase transition phenomena.
For an extensive survey on integro-differential operators  and their applications, one may refer to \cite{valdi,  servadei,stinga} and references therein.

To proceed further we defined the following space
\begin{align*}
	\mc Z= \{ u :\H \ra \mathbb{R}~: u \in L^2(\Om), (u(\xi )-u(\eta))\sqrt{\mc K(\eta^{-1}\xi )} \in   L^2(\mc S,d\xi d \eta )  \}
\end{align*}
with $ \mc S= \H \setminus(\Om^c \times \Om^c)$.

In this article we have defined an integro-differential operator on $\H$ and investigate the Lewy-Stampacchia  estimates. Moreover, we proved the existence of solution of a subcritical  problem involving the operator $\mf{L_\mc K}$ by establishing the compact embedding of the space $\mc Z_0$ (see Section 2).   In this regard, the results proved in the present article are completely new.
    The main results proved in this article are the following.
\begin{Theorem}\label{thmhf1}
Let $\Om$ be an bounded extension domain  in $\H$ and $f \in L^\infty(\Om)$.
Assume $u_0 \in \mc Z \cap L^\infty(\H\setminus\Om)$, $\phi \in \mc Z$ with $u_0\leq \phi $ a.e in $\H$ and $\mf L_{\mc K} \phi \in L^\infty(\Om)$. If  $\mc M_\phi= \{u \in \mc Z~:~ u =u_0 \text{ in } \Om^c,~ u \leq \phi \text{ in } \Om \}$ and $u \in \mc M_\phi$ is a solution of the variational inequality
\begin{equation}\label{hf28}
			\ds \int_{\mc S} (u(\xi)-u(\eta))((v-u)(\xi)-(v-u)(\eta)) \mc K(\eta^{-1}\xi )d \xi  d \eta
			\geq \ds \int_{\Om}  f(v-u) d\xi,~ \text{ for all } v\in \mc M_\phi
\end{equation}
then
\begin{equation}\label{hf29}
\begin{aligned}
0 & \leq - \int_{\mc S} (u(\xi)-u(\eta))(\psi(\xi)-\psi(\eta)) \mc K(\eta^{-1}\xi )d \xi  d \eta + \int_{\Om} f \psi~d\xi \leq \int_{\Om}  (\mf L_{\mc K} \phi +f )^+ \psi ~d \xi
\end{aligned}
\end{equation}
for all non-negative functions $\psi\in C^\infty_c(\Om).$

\end{Theorem}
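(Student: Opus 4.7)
The plan is to establish the two inequalities in \eqref{hf29} by testing the variational inequality \eqref{hf28} against two carefully chosen admissible competitors. For brevity I set
\begin{equation*}
\mc B(u,v) := \int_{\mc S}(u(\xi)-u(\eta))(v(\xi)-v(\eta))\mc K(\eta^{-1}\xi)\,d\xi\,d\eta,
\end{equation*}
and rely on the natural duality identity $\mc B(\phi,w) = -\int_\Om w\,\mf L_{\mc K}\phi\,d\xi$ valid for $w\in\mc Z$ supported in $\Om$, which is available because $\mf L_{\mc K}\phi\in L^\infty(\Om)$.

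Fix $\psi\in C^\infty_c(\Om)$ with $\psi\geq 0$. The first (lower) inequality follows from the elementary competitor $v := u - \psi$. Since $\psi\geq 0$, $u\leq\phi$ in $\Om$, and $\psi\equiv 0$ on $\Om^c$, we have $v\leq u\leq\phi$ in $\Om$ and $v = u = u_0$ on $\Om^c$, so $v\in\mc M_\phi$. Substituting into \eqref{hf28} and multiplying by $-1$ produces exactly $-\mc B(u,\psi) + \int_\Om f\psi \geq 0$, which is the left-hand inequality in \eqref{hf29}.

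The upper inequality requires a penalization. For $\varepsilon>0$ set $\omega_\varepsilon := (u+\varepsilon\psi-\phi)^+$ and $v_\varepsilon := u+\varepsilon\psi - \omega_\varepsilon = \min(u+\varepsilon\psi,\phi)$. Then $v_\varepsilon\leq\phi$ in $\Om$ and $v_\varepsilon = u_0$ on $\Om^c$ (since $\psi$ is compactly supported in $\Om$), so $v_\varepsilon\in\mc M_\phi$. Plugging into \eqref{hf28}, dividing by $\varepsilon$, rearranging, decomposing $u = \phi+(u-\phi)$, and applying the duality identity on the $\phi$-piece give
\begin{equation*}
-\mc B(u,\psi) + \int_\Om f\psi \;\leq\; \frac{1}{\varepsilon}\Big[-\mc B(u-\phi,\omega_\varepsilon) + \int_\Om \omega_\varepsilon(\mf L_{\mc K}\phi + f)\,d\xi\Big].
\end{equation*}
Retaining only the positive part of $\mf L_{\mc K}\phi + f$ and using $0\leq\omega_\varepsilon\leq\varepsilon\psi$ (which holds since $u\leq\phi$), the second integral divided by $\varepsilon$ is bounded above by $\int_\Om\psi(\mf L_{\mc K}\phi + f)^+\,d\xi$, precisely the target upper bound.

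It remains to show that $-\tfrac{1}{\varepsilon}\mc B(u-\phi,\omega_\varepsilon)$ vanishes in the limit. Writing $W := u-\phi+\varepsilon\psi$, so that $\omega_\varepsilon = W^+$ and $u-\phi = W-\varepsilon\psi$, bilinearity yields $\mc B(u-\phi,\omega_\varepsilon) = \mc B(W,W^+) - \varepsilon\mc B(\psi,\omega_\varepsilon)$. The pointwise inequality $(a-b)(a^+-b^+)\geq(a^+-b^+)^2\geq 0$ forces $\mc B(W,W^+)\geq 0$, so $-\tfrac{1}{\varepsilon}\mc B(u-\phi,\omega_\varepsilon)\leq\mc B(\psi,\omega_\varepsilon)$, which tends to $0$ as $\varepsilon\to 0^+$ by dominated convergence (noting $\omega_\varepsilon\to 0$ a.e.\ while dominated by $\varepsilon\psi$). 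Passing to the limit completes the argument. I expect the main technical obstacle to be the rigorous justification of the duality identity $\mc B(\phi,w) = -\int w\,\mf L_{\mc K}\phi$ in the non-smooth Heisenberg setting, which must be handled via a density argument in $\mc Z$ together with the $L^\infty$ hypothesis on $\mf L_{\mc K}\phi$, as well as verifying that $\omega_\varepsilon$ lies in $\mc Z$ so as to be an admissible perturbation.
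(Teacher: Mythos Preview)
Your argument is correct and is a genuinely different route from the paper's. The paper does not test \eqref{hf28} directly; instead it places the problem inside the abstract framework of Section~2 (with $\mc M=\mc Z$, $\mc N_0=\mc Z_0$, $\tilde{\mc N_0}=C_c^\infty(\Om)$ and $\mf J(u,v)=\mc B(u,v)$), verifies the structural hypotheses via Lemma~\ref{lemhf3}, constructs penalized solutions $u_r$ of $\mf L_{\mc K}u_r=w_r(\cdot,u_r)$ in Proposition~\ref{prophf2}, proves the integration-by-parts formula in Proposition~\ref{prophf1} and the convergence $\mf J(u_r,\psi)\to\mf J(u,\psi)$ in Lemma~\ref{lemhf4}, and then invokes the ready-made Theorem~\ref{thmhf3} from \cite{ser}. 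In short, the paper obtains \eqref{hf29} as a limit of the identities satisfied by the auxiliary $u_r$, whereas you obtain it by plugging the two competitors $u-\psi$ and $\min(u+\varepsilon\psi,\phi)$ straight into \eqref{hf28}.

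Your approach is more elementary and self-contained: it avoids solving the family of penalized Dirichlet problems and the delicate convergence Lemma~\ref{lemhf4}. The paper's approach, on the other hand, buys modularity (once the abstract Theorem~\ref{thmhf3} is in place, any operator fitting the template yields Lewy--Stampacchia for free) and uniqueness of the obstacle solution as a by-product of the comparison principle embedded in \eqref{hf4}. Two small points worth tightening in your write-up: the duality $\mc B(\phi,\omega_\varepsilon)=-\int_\Om\omega_\varepsilon\,\mf L_{\mc K}\phi$ you need for $\omega_\varepsilon\in\mc Z_0$ (not merely $C_c^\infty$) is exactly how the paper \emph{defines} ``$\mf L_{\mc K}\phi\in L^\infty(\Om)$'' in the paragraph before Theorem~\ref{thmhf3}, so no extra density argument is required; and for the dominated-convergence step $\mc B(\psi,\omega_\varepsilon)\to0$, the domination ``$\omega_\varepsilon\le\varepsilon\psi$'' is not the right one for the bilinear form---use instead $|\omega_\varepsilon(\xi)-\omega_\varepsilon(\eta)|\le|(u-\phi)(\xi)-(u-\phi)(\eta)|+\varepsilon|\psi(\xi)-\psi(\eta)|$, which gives an $\varepsilon$-independent $L^2(\mc S,\mc K\,d\xi\,d\eta)$ majorant.
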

\begin{Theorem}\label{thmhf2}
Let $f$ be a Carath\'eodory function satisfying the following conditions
\begin{equation}\label{hf30}
\begin{aligned}
 \left\{
\begin{array}{ll}
& \text{there exists } a_1,a_2>0 \text{ and } q \in (2,Q^*),Q^*=\frac{2Q}{Q-2s} \text{  such that  }\\
& |f(\xi,l)|\leq a_1+a_2|l|^{q-1} \text{ a.e } \xi \in \Om,~l\in \mathbb{R};\\
&  \ds \lim_{|l|\ra 0} \frac{f(\xi,l)}{|l|}=0\text{ uniformly in  }  x \in \Om;\\
& \text{there exist } \vartheta >2 \text{ and } \mc R>0 \text{ such that  a.e } x \in \Om, ~l \in \mathbb{R}, |l|>\mc R,\\ &\quad \quad 0<\vartheta F(\xi,l)\leq lf(\xi,l)\\
& \text{ where } F \text{ is the primotive of } f .
\end{array}
\right.
\end{aligned}
\end{equation}
Then the following problem

\begin{equation*}
\begin{aligned}
\mf{(P)}\left\{
\begin{array}{ll}
\mf L_{\mc K} u & = f(\xi,u) \text{ in }  \Om, u  = 0    \text{ in } \H\setminus \Om
\end{array}
\right.
\end{aligned}
\end{equation*}
\end{Theorem}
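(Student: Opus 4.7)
The conclusion (missing in the excerpt) is that problem $\mf{(P)}$ admits a nontrivial weak solution in $\mc Z_0$. My plan is to obtain this solution as a critical point of the associated energy functional via the Mountain Pass Theorem, exactly along the lines of the Ambrosetti–Rabinowitz framework adapted to the integrodifferential setting on $\H$.

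First I would set up the variational framework. Define on $\mc Z_0$ the functional
\[
I(u) = \frac{1}{2}\int_{\mc S}(u(\xi)-u(\eta))^2 \mc K(\eta^{-1}\xi)\,d\xi\,d\eta - \int_\Om F(\xi,u)\,d\xi,
\]
where $F$ is the primitive of $f$. The growth assumption in \eqref{hf30} together with the compact embedding $\mc Z_0 \hookrightarrow L^r(\Om)$ for $r\in[1,Q^*)$ (established in Section~2) makes $I$ well-defined and of class $C^1$ on $\mc Z_0$, with
\[
\ld I'(u),v\rd = \int_{\mc S}(u(\xi)-u(\eta))(v(\xi)-v(\eta))\mc K(\eta^{-1}\xi)\,d\xi\,d\eta - \int_\Om f(\xi,u)v\,d\xi,
\]
so critical points of $I$ are precisely weak solutions of $\mf{(P)}$.

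Next I would verify the mountain pass geometry. Clearly $I(0)=0$. For the local minimum near the origin, I combine the hypothesis $\lim_{|l|\to 0}f(\xi,l)/|l|=0$ with the subcritical bound $|f(\xi,l)|\leq a_1+a_2|l|^{q-1}$ to deduce, for any $\e>0$, $|F(\xi,l)|\leq \e l^2 + C_\e |l|^q$. Invoking the continuous embeddings $\mc Z_0 \hookrightarrow L^2(\Om)$ and $\mc Z_0 \hookrightarrow L^q(\Om)$, choosing $\e$ small, yields constants $\rho,\alpha>0$ with $I(u)\geq\alpha$ whenever $\|u\|_{\mc Z_0}=\rho$. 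For the second geometric condition I use the Ambrosetti–Rabinowitz condition: a standard integration gives $F(\xi,l)\geq c_1|l|^\vartheta - c_2$ for $|l|$ large. Fix any $u_0\in\mc Z_0\setminus\{0\}$ with $u_0\geq 0$; then along the ray $tu_0$ one has $I(tu_0)\leq \tfrac{t^2}{2}\|u_0\|^2 - c_1 t^\vartheta\int_\Om u_0^\vartheta + c_2|\Om|\to -\infty$ as $t\to+\infty$ because $\vartheta>2$, yielding a mountain pass geometry point $e=t_0u_0$.

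The Palais–Smale condition is the step I expect to be the main technical obstacle. Given a sequence $\{u_n\}\subset \mc Z_0$ with $I(u_n)\to c$ and $I'(u_n)\to 0$, I test $I'(u_n)$ against $u_n$ and combine with $I(u_n)$ using the AR condition $\vartheta F(\xi,l)\leq l f(\xi,l)$ for $|l|>\mc R$, plus boundedness of $F$ on $\{|l|\leq\mc R\}$; the factor $(\tfrac{1}{2}-\tfrac{1}{\vartheta})>0$ then bounds $\|u_n\|_{\mc Z_0}$. Passing to a subsequence, $u_n \rp u$ weakly in $\mc Z_0$ and, by the compact embedding into $L^q(\Om)$, $u_n\to u$ strongly in $L^q(\Om)$ and a.e. in $\Om$. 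Using the subcritical bound on $f$ together with the Vitali / dominated convergence theorem, $\int_\Om f(\xi,u_n)(u_n-u)\,d\xi \to 0$. Writing $\ld I'(u_n)-I'(u),u_n-u\rd$, all nonlocal cross-terms converge by weak convergence and the bilinearity of the quadratic form, leaving $\|u_n-u\|_{\mc Z_0}^2\to 0$, which is the desired strong convergence.

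Finally, the Mountain Pass Theorem produces a critical point $u\in\mc Z_0$ at level $c\geq\alpha>0$, and since $I(0)=0<c$ this $u$ is nontrivial, hence a nontrivial weak solution of $\mf{(P)}$. The principal difficulty beyond routine mountain pass verification is the compactness step, which rests entirely on the embedding $\mc Z_0 \hookrightarrow\hookrightarrow L^q(\Om)$ for $q<Q^*$ promised in Section~2; once that ingredient is available, the rest of the argument is a direct adaptation of the Euclidean fractional case.
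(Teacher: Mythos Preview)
Your proposal is correct and follows essentially the same route as the paper: define the energy functional on $\mc Z_0$, verify the mountain pass geometry via the estimates $|F(\xi,l)|\leq \e|l|^2+C_\e|l|^q$ and $F(\xi,l)\geq c|l|^\vartheta-C$, establish the Palais--Smale condition through boundedness from the AR inequality and strong convergence via the compact embedding $\mc Z_0\hookrightarrow L^q(\Om)$, and conclude by the Mountain Pass Theorem that the critical level $c\geq\alpha>0=I(0)$ yields a nontrivial solution. The only cosmetic difference is in the final compactness step, where the paper shows $\|u_n\|_{\mc Z_0}^2\to\|u_*\|_{\mc Z_0}^2$ directly rather than testing $I'(u_n)-I'(u)$ against $u_n-u$, but in a Hilbert space these are equivalent.
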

has a mountain pass type solution which is not identically zero.

Turing to layout of the paper, in Section 2, we will give the  variational framework, fibering
map analysis and compactness of Palais-Smale sequences. In section 3, we gave the proof of Theorem \ref{thmhf1}. In section 4, we gave the proof of Theorem \ref{thmhf2}.
\section{Preliminaries}
In this section, we state some known results rquired for the variational framework.
Let $\mc M$ is the linear
 subspace of Lebesgue measurable functions from $\H$ to $\mathbb{R}$  with the following property
\begin{align*}
\text{ if } u \in \mc M \text{ then } u|_\Om \in L^1(\Om).
\end{align*}
Let $A,B \subset \mc M $ be  the sets such that the following product is well defined
\begin{align*}
P: A\times B\ra L^1(\mc S, d\xi,d \eta) \qquad (a,b) \mapsto P(a,b):=ab.
\end{align*}
Let $\mc W$ be the set containing all the non negative constants and the function $u_0-\phi$, where functions $u_0, \phi \in \mc M$ such that $u_0\leq \phi$ a.e in $\H$. Further, we consider a linear subspaces $\tilde{\mc N_0},~ \mc N_0$  such that $\tilde{\mc N_0} \subset \mc N_0\subset \mc M$ and $\mc N_0$ satisfying the following property
\begin{align}\label{hf3}
\text{ if } v \in \mc N_0 \text{ and } w \in \mc W, \text{ then } (v+w)^+ \in \mc N_0
\end{align}
Let $g: \mc M\ra A$ and $h: \mc M\ra B$ be two well-defined operators and let   $\mc J :\mc M\times \mc M \ra \mathbb{R}$ be the functional defined by
\begin{align*}
\mf J (\varphi, \psi )= -\int_{\mc S} g(\varphi)h(\psi )d\xi d \eta.
\end{align*}
 Throughout the article we assume the following assumptions on the functional $g,h$ and $\mf J$:
\begin{align}\label{hf4}
\left\{
\begin{array}{ll}
g(u+r)= g(u), ~h(-u)= -h(u) \text{ for all } u \in \mc M \text{ and } r \in \mathbb{R}; \\
\text{ if } u, v \in \mc M \text{ such that } (u-v)^+ \in \mc N_0 \text{ and } \mf J(u,(u-v)^+)\geq \mf J(v,(u-v)^+)\\
 \text{ then } u \leq v \text{ a.e  in } \Om.
\end{array}
\right.
\end{align}
 For $u \in \mc M,$ if there exists $\Upsilon_u\in L^\infty(\Om)$  such that $\mf J(u,v)= \int_{\Om} \Upsilon_u v ~dx$ for all $v \in \mc N_0$ then we denote $\mf J(u):= \Upsilon_u$ and $ \mf J(u) \in L^\infty(\Om)$.
	Define the cut-off function
	\begin{equation}\label{hf16}
		\begin{aligned}
			D_r(l) =  \left\{
			\begin{array}{ll}
				0  &  \text{ if } l\leq 0, \\
				l/r  & \text{ if }  0<l<r,\\
				1 & \text{ if } l\geq r
			\end{array}
			\right.
		\end{aligned}
	\end{equation}
	where  $r \in (0,1)$.
	Suppose  $\mf J(\phi) \in L^\infty(\Om)$ and $f\in L^\infty (\Om)$. We assume that there exists $u_r \in \mc M$ such that $ u_r-u_0\in \mc N_0$  and satisfies the following
	\begin{align*}
			\mf{J}(u_r, \varphi) = \ds\int_{\Om} [( (\mf J(\phi) +f)^+)(1-D_r(\phi-u_r))-f]\varphi  d \xi \text{ for all  } \varphi \in \mc N_0.
	\end{align*}

 Also, if $u_r \rightarrow u$ uniformly in $\mathbb R^N$ as $r\rightarrow 0$ then up to a subsequence,
$\mf J(u_r,\varphi) \ra \mf J(u,\varphi) \text{ for all } \varphi \in  \tilde{\mc N_0}.$ Then we have the following theorem from \cite{ser}:
\begin{Theorem}\label{thmhf3}
	Let $u_0, \phi$ and  $\mf J $ satisfy the above assumptions and if  $u \in \mc M$ is such that $u-u_0 \in \mc N_0,$ $u \leq \phi  \text{  a.e  in  } \Om$ and is a  solution of the following  variational inequality
	\begin{align*}
		\left\{
		\begin{array}{ll}
			  \ds \int_{\mc S} g(u)h(v-u)d \mu \geq \ds \int_{\Om}  f(v-u) d\xi\\ \text{ for all } v\in \{ \mc M \; :\; v-u_0 \in \mc N_0\}.
		\end{array}
		\right.
	\end{align*}
	 then
	\begin{equation*}
	0\leq \mf J(u,\varphi)+ \int_{\Om} f \varphi~ d\xi \leq \int_{\Om} (\mf J(\phi)+f)^+ \varphi~d\xi
	\end{equation*}
	for all $\varphi \in \tilde{\mc N_0},~ \varphi \geq 0$ a.e. in $\Om$.
\end{Theorem}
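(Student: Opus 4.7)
The plan is to split \eqref{hf29} into a lower bound, obtained by directly testing the variational inequality, and an upper bound, obtained through the approximating sequence $u_r$ provided in the hypotheses.

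For the lower bound, take any nonnegative $\varphi\in\tilde{\mc N_0}$ and use $v=u-\varphi$ as a test function. Admissibility follows from two facts: $v-u_0=(u-u_0)-\varphi\in\mc N_0$ because $\mc N_0$ is a linear subspace containing $\tilde{\mc N_0}$, and $v\le u\le\phi$ almost everywhere in $\Om$. Exploiting the oddness $h(-\varphi)=-h(\varphi)$ from \eqref{hf4}, the left-hand side of the variational inequality collapses to $\mf J(u,\varphi)$ and the right-hand side to $-\int_\Om f\varphi\,d\xi$, yielding $\mf J(u,\varphi)+\int_\Om f\varphi\,d\xi\ge 0$.

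For the upper bound, I would first prove the sandwich $u\le u_r\le\phi$ almost everywhere in $\Om$ for every $r\in(0,1)$. To show $u_r\le\phi$, test the equation for $u_r$ against $\psi=(u_r-\phi)^+$, which lies in $\mc N_0$ by \eqref{hf3} applied to $u_r-u_0\in\mc N_0$ and $u_0-\phi\in\mc W$. On $\{u_r>\phi\}$ one has $\phi-u_r<0$, hence $D_r(\phi-u_r)=0$, so the integrand on the right-hand side of the equation for $u_r$ becomes $(\mf J(\phi)+f)^+-f\ge\mf J(\phi)$. This gives $\mf J(u_r,(u_r-\phi)^+)\ge\mf J(\phi,(u_r-\phi)^+)$, and \eqref{hf4} then forces $u_r\le\phi$. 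The reverse inequality $u\le u_r$ is analogous: take $\psi=(u-u_r)^+\in\mc N_0$ (again via \eqref{hf3} with $v=u-u_r\in\mc N_0$ and $w=0\in\mc W$), combine the variational inequality for $u$ applied to $v=u-\psi=\min(u,u_r)\le\phi$ with the equation for $u_r$, and invoke \eqref{hf4} once more.

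Once the ordering is established, the uniform convergence $u_r\to u$ (built into the continuity condition on $\mf J$ stated just before the theorem) gives $\mf J(u_r,\varphi)\to\mf J(u,\varphi)$ for every $\varphi\in\tilde{\mc N_0}$. For nonnegative $\varphi\in\tilde{\mc N_0}\subset\mc N_0$, testing the equation for $u_r$ and using $1-D_r(\phi-u_r)\in[0,1]$ gives
\[\mf J(u_r,\varphi)+\int_\Om f\varphi\,d\xi=\int_\Om((\mf J(\phi)+f)^+)(1-D_r(\phi-u_r))\varphi\,d\xi\le\int_\Om(\mf J(\phi)+f)^+\varphi\,d\xi,\]
and passing to the limit $r\to 0$ produces the upper bound of \eqref{hf29}. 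The main obstacle I anticipate is the step $u\le u_r$: the test function $v=\min(u,u_r)$ must simultaneously be admissible in the variational inequality for $u$ (requiring verification of $v-u_0\in\mc N_0$ together with $v\le\phi$) and must yield the correct sign when compared against the equation for $u_r$. This is precisely where the interplay between the closure property \eqref{hf3}, the abstract comparison principle \eqref{hf4}, and the specific cutoff ansatz defining $u_r$ becomes essential.
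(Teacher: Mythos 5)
First, a point of reference: the paper itself does not prove Theorem \ref{thmhf3} --- it is imported from \cite{ser} (``Then we have the following theorem from \cite{ser}''), so your proposal can only be judged against the framework set up before the statement and against the classical Lewy--Stampacchia penalization argument that the surrounding hypotheses (the cut-off $D_r$, the approximants $u_r$, the properties \eqref{hf3} and \eqref{hf4}) are clearly designed for. Your lower bound (testing with $v=u-\varphi$ and using $h(-\varphi)=-h(\varphi)$) and your first comparison $u_r\le\phi$ (testing the equation for $u_r$ with $(u_r-\phi)^+$, which lies in $\mc N_0$ by \eqref{hf3} because $u_r-u_0\in\mc N_0$ and $u_0-\phi\in\mc W$) are both correct and standard.

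The genuine gap is the step $u\le u_r$, and it is not merely unproven --- the inequality is false in general, and the argument you sketch cannot close. From the variational inequality with $v=\min(u,u_r)$ you get $\mf J(u,(u-u_r)^+)\ge-\int_\Om f\,(u-u_r)^+\,d\xi$, while the equation for $u_r$ gives
\begin{align*}
\mf J(u_r,(u-u_r)^+)=-\int_\Om f\,(u-u_r)^+\,d\xi+\int_\Om(\mf J(\phi)+f)^+\bigl(1-D_r(\phi-u_r)\bigr)(u-u_r)^+\,d\xi,
\end{align*}
whose last term is \emph{nonnegative}; both quantities therefore dominate the same number, and you cannot deduce $\mf J(u,(u-u_r)^+)\ge\mf J(u_r,(u-u_r)^+)$, which is what \eqref{hf4} requires. (On $\{u>u_r\}$ one only knows $\phi-u_r>0$, not $\phi-u_r\ge r$, so $1-D_r(\phi-u_r)$ need not vanish there.) In fact the comparison obtainable from \eqref{hf4} goes the other way: testing the variational inequality with $v=\max(u,u_r)=u+(u_r-u)^+$ --- admissible precisely thanks to your Step~1, $u_r\le\phi$ --- yields $\mf J(u,(u_r-u)^+)\le-\int_\Om f\,(u_r-u)^+\,d\xi\le\mf J(u_r,(u_r-u)^+)$, hence $u_r\le u$. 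A one-dimensional check ($-u''$, $f\equiv1$, constant obstacle) shows that $u$ does not solve the penalized equation for fixed $r>0$ (the forcing differs on the transition layer $\{0<\phi-u_r<r\}$), so $u_r\ne u$ and $u\le u_r$ fails. With only $u_r\le u\le\phi$ in hand, passing to the limit in $0\le\mf J(u_r,\varphi)+\int_\Om f\varphi\,d\xi\le\int_\Om(\mf J(\phi)+f)^+\varphi\,d\xi$ still requires $\mf J(u_r,\varphi)\to\mf J(u,\varphi)$; the framework supplies this only \emph{conditionally} on $u_r\to u$ uniformly, and that convergence is the real crux (in \cite{ser} it comes from monotonicity of $r\mapsto u_r$ together with uniqueness of the solution of the variational inequality, or is simply taken as a standing assumption). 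Your proposal treats it as already given, which it is not.
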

\subsection{Variational Framework for $\mf L_{\mc K}$}

In this section we give the variational setup for the operator $\mf L_{\mc K}$. Define the following norm on the space $\mc Z$
 \begin{align*}
 \|u\|_{\mc Z} = \|u\|_{L^2(\Om)}+ \left( \int_{\mc S} |u(\xi)-u(\eta)|^2\mc K(\eta^{-1}\xi)~d\xi d \eta   \right)^{1/2}
 \end{align*}

\begin{Lemma}
	Let $\phi \in C^2_0(\Om)$. Then $  |\phi(\xi)-\phi(\eta)|^2 \mc K(\eta^{-1}\xi) \in  L^1(\mathbb{H}^{2N})$.
\end{Lemma}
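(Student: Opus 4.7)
The plan is to verify the $L^1$ estimate directly by splitting the domain of integration according to whether $\eta^{-1}\xi$ lies inside or outside the unit Koranyi ball. Throughout, the key input is the hypothesis $\theta\,\mathcal K\in L^1(\mathbb H)$ from \eqref{hf18}, which handles both the short-range singularity (weighted by $|\zeta|^2$) and the long-range tail (weighted by $1$).

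First I would make the change of variables $\zeta=\eta^{-1}\xi$ (equivalently $\xi=\eta\zeta$) in the inner integral. Since Haar measure on $\mathbb H$ is left-invariant, this is measure-preserving, so
\[
\iint_{\mathbb H^{2N}} |\phi(\xi)-\phi(\eta)|^2 \mathcal K(\eta^{-1}\xi)\,d\xi\,d\eta = \iint_{\mathbb H^{2N}} |\phi(\eta\zeta)-\phi(\eta)|^2 \mathcal K(\zeta)\,d\zeta\,d\eta.
\]
On the long-range region $\{|\zeta|>1\}$, the integrand in $\eta$ vanishes unless $\eta$ lies in $\mathrm{supp}(\phi)\cup\mathrm{supp}(\phi)\cdot\zeta^{-1}$, a set of Haar measure at most $2|\mathrm{supp}(\phi)|$ by right-invariance of Haar measure on the (nilpotent, hence unimodular) group $\mathbb H$. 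Combined with the crude bound $|\phi(\eta\zeta)-\phi(\eta)|^2\le 4\|\phi\|_\infty^2$, this shows the $|\zeta|>1$ contribution is at most a constant multiple of $\int_{|\zeta|>1}\mathcal K(\zeta)\,d\zeta = \int_{|\zeta|>1}\theta(\zeta)\mathcal K(\zeta)\,d\zeta<\infty$.

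On the short-range region $\{|\zeta|\le 1\}$ I would exploit the smoothness of $\phi$. Again, the integrand is zero unless $\eta$ belongs to a fixed bounded set $K$ (a Koranyi-$1$ neighbourhood of $\mathrm{supp}(\phi)$). In Euclidean coordinates, writing $\eta=(\eta_x,\eta_y,\eta_t)$ and $\zeta=(x,y,t)$, the group law gives $\eta\zeta-\eta=(x,y,\,t+2\langle x,\eta_y\rangle-2\langle\eta_x,y\rangle)$, so $|\eta\zeta-\eta|_{\mathrm{Eucl}}\le C_K(|x|+|y|+|t|)$ whenever $\eta\in K$. Using the standard Koranyi estimates $|x|,|y|\le|\zeta|$ and $|t|\le|\zeta|^2$, this upgrades to $|\eta\zeta-\eta|_{\mathrm{Eucl}}\le C|\zeta|$ when $|\zeta|\le 1$. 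Since $\phi\in C^2_0(\Omega)$ is globally Lipschitz in the Euclidean sense, $|\phi(\eta\zeta)-\phi(\eta)|^2\le C'|\zeta|^2$; integrating yields a constant multiple of $\int_{|\zeta|\le 1}|\zeta|^2\mathcal K(\zeta)\,d\zeta=\int_{|\zeta|\le 1}\theta(\zeta)\mathcal K(\zeta)\,d\zeta<\infty$.

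The main obstacle is the short-range Lipschitz estimate, because the Koranyi distance $|\zeta|$ is not comparable to Euclidean distance for general $\zeta$; the trick that makes the argument go through is that confining $\eta$ to the bounded set $K$ uniformly bounds the bilinear correction in the group law, so the Euclidean detour becomes quantitative and is compatible with the $|\zeta|^2$ weight coming from $\theta$.
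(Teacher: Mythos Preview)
Your argument is correct and follows the same overall strategy as the paper: bound $|\phi(\xi)-\phi(\eta)|^2$ by a constant times $\theta(\eta^{-1}\xi)$ on a region that has bounded measure in one of the two variables, and then invoke $\theta\mc K\in L^1(\H)$. The paper does this more tersely---it restricts the domain to $\Om\times\H$ using $\phi|_{\Om^c}=0$ and simply asserts the pointwise bound $|\phi(\xi)-\phi(\eta)|\le \|\na\phi\|_{L^\infty}\,|\eta^{-1}\xi|$---whereas you make the change of variables $\zeta=\eta^{-1}\xi$, split explicitly at $|\zeta|=1$, and justify the short-range Lipschitz estimate by computing the Euclidean displacement $\eta\zeta-\eta$ via the group law and using the confinement of $\eta$ to a bounded set to control the bilinear correction term. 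Your route is a bit longer but more self-contained: it replaces the (not entirely trivial) Kor\'anyi-Lipschitz estimate for smooth functions, which the paper states without proof, by an elementary coordinate computation.
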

\begin{proof}
	Since $\phi =0 $ on $\H \setminus \Om$,
	\begin{equation}\label{fr1}
\begin{aligned}
	\int_{\HH} |\phi(\xi)-\phi(\eta)|^2\mc K(\eta^{-1}\xi) ~d\xi d\eta & = 	\int_{\mc S} |\phi(\xi)-\phi(\eta)|^2\mc K(\eta^{-1}\xi) ~d\xi d\eta \\
	& \leq  2
		\int_{\Om \times \H} |\phi(\xi)-\phi(\eta)|^2\mc K(\eta^{-1}\xi) ~d\xi d\eta .
	\end{aligned}
		\end{equation}
	The fact that  $\phi \in C^2_0(\Om)$ implies
	\begin{align*}
	|\phi(\xi)-\phi(\eta)|\leq \|\na \phi \|_{L^\infty(\H)} |\xi- \eta | \text{ and } |\phi(\xi)-\phi(\eta)|\leq \| \phi \|_{L^\infty(\H)}.
	\end{align*}
	Hence  using the definition of $\theta$, defined in \eqref{hf18}, we have
	\begin{align}\label{fr2}
	|\phi(\xi)-\phi(\eta)|\leq 2\| \phi \|_{C^1(\H)}\min\{|\xi-\eta|,1  \} = 2 \|\phi\|_{C^1(\H)} \sqrt{\theta(\eta^{-1}\xi)}
	\end{align}
	From \eqref{fr1} and \eqref{fr2} and \eqref{hf18}, we get
	\begin{align*}
	\int_{\HH} |\phi(\xi)-\phi(\eta)|^2\mc K(\eta^{-1}\xi) ~d\xi d\eta & \leq  2
	\int_{\Om \times \H} |\phi(\xi)-\phi(\eta)|^2\mc K(\eta^{-1}\xi) ~d\xi d\eta \\
	& \leq 8 \|\phi\|^2_{C^1(\H)}|\Om|	\int_{ \H} \theta(\xi )\mc K(\xi) ~d\xi d\eta< \infty.
	\end{align*}
	The proof follows. \QED	
\end{proof}

The above Lemma implies $C^2_0(\H) \subset \mc Z$. Now we define the following subspaces
\begin{align*}
& \mc Z_0=\{ u \in \mc Z ~:~ u =0 \text{ a.e  in } \Om^c\},\\
&  \tilde{\mc Z_0} = C_c^\infty(\Om),\\
& A=B= \left\{  u :\HH \ra \mathbb{R}~:~ u|_{\mc S} \in L^2(\mc S, d\xi d\eta) \right\}
\end{align*}
and $P$ is the usual product between functions. Clearly, $\tilde{\mc Z_0} \subseteq \mc Z_0  \subseteq \mc Z$ and $u^+ \in \mc Z$ for all $u \in \mc Z$.

 In \cite{arka}, Mallick and Adimurthi defined the  usual fractional order Sobolev space on $\H$ as follows
\begin{align*}
W^{s,p}(\Om) =\left\{  u: \H \ra \mathbb{R}~:~ u \in L^2(\Om)\text{ and } \int_{\Om\times \Om } \frac{|u(\xi)-u(\eta)|^2}{|\eta^{-1}\xi|^{Q+2s}}~d\xi d \eta <\infty \right\}
\end{align*}
where $ s\in (0,1)$ and $p>1$  as $W^{s,p}_0(\Om)$ is the closure of $C_c^\infty(\Om)$ with respect to the following norm
\begin{align*}
\|u\|_{W^{s,p}(\Om)} = \|u\|_{L^2(\Om)} + \left( \int_{\Om\times \Om } \frac{|u(\xi)-u(\eta)|^2}{|\eta^{-1}\xi|^{Q+2s}}~d\xi d \eta   \right)^{1/2}.
\end{align*}
Clearly, $W^{s,p}_0(\H)= W^{s,p}(\H)$.
\begin{Lemma}\label{lemhf1}
	Let $c(\mu) = \max \{ 1, \mu^{-1/2}  \} $. 	The following assertions holds true:
	\begin{enumerate}
		\item [(i)] If $v \in \mc Z$, then $ v \in W^{s,2}(\Om) $ and $\|v\|_{W^{s,2}(\Om)}   \leq c(\mu) \|v\|_{\mc Z} $.
		\item[(ii)]   if $v \in \mc Z_0$  then $v \in  W^{s,2}(\H)$ and $\|v\|_{W^{s,2}(\Om)}  \leq \|v\|_{W^{s,2}(\R)}  \leq c(\mu) \|v\|_{\mc Z_0} $.
		\item[(iii)]  if $v \in \mc Z_0$  and $Q^*= \frac{2Q}{Q-2s}$, then there exists a positive constant $c(Q,s)$ such that
		\begin{align*}
			\|v\|_{L^{Q^*}(\Om)}^2 \leq c(Q,s ) \int_{\HH } \frac{|v(\xi)-v(\eta)|^2}{|\eta^{-1}\xi|^{Q+2s}}~d\xi d \eta\leq c(Q,s) c^2(\mu) \|v\|_{\mc Z_0}^2  .
		\end{align*}
		\item[(iv)]  The space $\mc Z_0$ is a norm linear space endowed with the norm
		\begin{align}\label{hf14}
		\|u\|_{\mc Z_0} = \left( \int_{\mc S} |u(\xi)-u(\eta)|^2\mc K(\eta^{-1}\xi)~d\xi d \eta   \right)^{1/2}.
		\end{align}Moreover,  there exists a constant $C>0$ such that for any $ v\in \mc Z_0$, we have
		\begin{align}\label{hf13}
		 \|v\|_{\mc Z_0}\leq \|v\|_{\mc Z} \leq C \|v\|_{\mc Z_0}.
		\end{align}
	\end{enumerate}
\end{Lemma}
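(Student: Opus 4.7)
The plan is to establish the four assertions in the order (i), (iii), (ii), (iv), because parts (ii) and (iv) are obtained from (iii) via a Poincar\'e-type argument, so (iii) is the analytic core of the lemma.

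For (i), I would exploit the pointwise lower bound $\mc K(\eta^{-1}\xi) \geq \mu |\eta^{-1}\xi|^{-(Q+2s)}$ from \eqref{hf18} together with the inclusion $\Om \times \Om \subset \mc S$. These yield
\[
\int_{\Om \times \Om} \frac{|v(\xi)-v(\eta)|^2}{|\eta^{-1}\xi|^{Q+2s}}\, d\xi d\eta \leq \mu^{-1} \int_{\mc S} |v(\xi)-v(\eta)|^2 \mc K(\eta^{-1}\xi)\, d\xi d\eta.
\]
Taking square roots, adding the trivial bound $\|v\|_{L^2(\Om)}\leq \|v\|_{\mc Z}$, and using $c(\mu)=\max\{1,\mu^{-1/2}\}$ delivers the claim.

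For (iii), I would first extend $v \in \mc Z_0$ by zero to all of $\H$. Since the Gagliardo integrand vanishes on $\Om^c \times \Om^c$, the integral on $\HH$ collapses to the one on $\mc S$, which is controlled by $\mu^{-1}\|v\|_{\mc Z_0}^2$ via the same $\mc K$-lower bound. The embedding
\[
\|v\|_{L^{Q^*}(\Om)}^2 \leq c(Q,s) \int_{\HH} \frac{|v(\xi)-v(\eta)|^2}{|\eta^{-1}\xi|^{Q+2s}}\, d\xi d\eta
\]
is then the fractional Folland--Stein/Sobolev inequality on the Heisenberg group, the counterpart of the Euclidean fractional Sobolev embedding, which is available in the references cited in the Introduction. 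Chaining the two estimates yields (iii).

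Parts (ii) and (iv) then follow by essentially bookkeeping. For (ii), the Gagliardo seminorm on $\HH$ is bounded by $\mu^{-1/2}\|v\|_{\mc Z_0}$ by the argument above; the $L^2(\Om)$-part is controlled via H\"older on the bounded domain $\Om$ together with (iii), namely $\|v\|_{L^2(\Om)} \leq |\Om|^{1/2 - 1/Q^*} \|v\|_{L^{Q^*}(\Om)} \leq C\|v\|_{\mc Z_0}$, which is the Poincar\'e-type inequality needed. The inequality $\|v\|_{W^{s,2}(\Om)} \leq \|v\|_{W^{s,2}(\H)}$ is obvious. For (iv), nondegeneracy of \eqref{hf14} follows from (iii) (a vanishing seminorm forces $v\equiv 0$ in $L^{Q^*}$), the remaining norm axioms are immediate from the $L^2$-structure of $(\mc S, \mc K(\eta^{-1}\xi)\,d\xi d\eta)$, and the equivalence $\|v\|_{\mc Z_0} \leq \|v\|_{\mc Z} \leq C\|v\|_{\mc Z_0}$ is obtained by combining the trivial lower bound with the Poincar\'e estimate just established. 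The principal obstacle is (iii): the other items are kernel comparisons plus H\"older, whereas (iii) relies on the nontrivial fractional Sobolev embedding on the Heisenberg group, for which one invokes the existing literature.
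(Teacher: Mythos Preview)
Your proposal is correct and follows essentially the same route as the paper: kernel comparison via \eqref{hf18} for (i), reduction of the $\HH$ integral to $\mc S$ plus the fractional Sobolev inequality on $\H$ (the paper cites \cite[Theorem~1.1]{arka}) for (iii), and H\"older plus (iii) for the Poincar\'e bound underlying (ii) and (iv). The only cosmetic differences are that the paper orders the proof (i), (ii), (iii), (iv) and argues nondegeneracy in (iv) directly (if $\|v\|_{\mc Z_0}=0$ then $v$ is a.e.\ constant on $\mc S$, hence constant on $\H$, hence zero since $v=0$ on $\Om^c$) rather than via (iii).
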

\begin{proof}
	(i)	Consider
	\begin{align*}
	\int_{\Om\times \Om } \frac{|v(\xi)-v(\eta)|^2}{|\eta^{-1}\xi|^{Q+2s}}~d\xi d \eta & \leq \frac{1}{\mu}  \int_{\Om\times \Om } |v(\xi)-v(\eta)|^2 \mc K(\eta^{-1}\xi)~d\xi d \eta \\
	& \leq \frac{1}{\mu}  \int_{\mc S } |v(\xi)-v(\eta)|^2 \mc K(\eta^{-1}\xi)~d\xi d \eta <\infty.
	\end{align*}
	(ii)		Using the fact that $v=0$ a.e. in $\Om^c$, we get $\|v\|_{L^2(\H)}= \|v\|_{L^2(\Om)}$ and
	\begin{align*}
	\int_{\HH } \frac{|v(\xi)-v(\eta)|^2}{|\eta^{-1}\xi|^{Q+2s}}~d\xi d \eta & =   \int_{\mc S } \frac{|v(\xi)-v(\eta)|^2}{|\eta^{-1}\xi|^{Q+2s}}~d\xi d \eta \\
	& \leq \frac{1}{\mu}  \int_{\mc S } |v(\xi)-v(\eta)|^2 \mc K(\eta^{-1}\xi)~d\xi d \eta <\infty.
	\end{align*}
	(iii) Let $v \in\mc Z_0$ then by (ii), $v \in  W^{s,2}(\H)$. Now using the \cite[Theorem 1.1]{arka}  with $\al=0$, we have
	\begin{align*}
	\|v\|_{L^{Q^*}(\Om)}^2 \leq c \int_{\HH } \frac{|v(\xi)-v(\eta)|^2}{|\eta^{-1}\xi|^{Q+2s}}~d\xi d \eta
	\end{align*}
	where $c$ depends on $Q$ and  $s$.\\
	(iv) 	Let $\|v\|_{\mc Z_0}=0$ then It implies $v(\xi)=v(\eta) $ a.e in $\mc S$. Let $v = c \geq 0 $ a.e. in $ \H$ but $v \in \mc Z_0$  implies $ v=0$ a.e. in $\Om^c$. That is,  $v= 0 $ a.e. in $\H$.  Hence, $\|\cdot \|_{\mc Z_0}$ is a norm. For \eqref{hf13}, by definition of $\|\cdot\|_{\mc Z}$,
	$\|v\|_{\mc Z_0} \leq \|v\|_{\mc Z}$. With the help of H\"older's inequality  and (iii), we get
	\begin{align*}
 \|v\|_{\mc Z}^2 & \leq 2\|v\|_{L^2(\Om)}^2+ 2\int_{\mc S } |v(\xi)-v(\eta)|^2 \mc K(\eta^{-1}\xi)~d\xi d \eta\\
 &\leq  2C_1 \|v\|_{L^{Q^*}(\Om)}^2+ 2\int_{\mc S } |v(\xi)-v(\eta)|^2 \mc K(\eta^{-1}\xi)~d\xi d \eta \\
 & \leq 2cC_1 \int_{\HH } \frac{|v(\xi)-v(\eta)|^2}{|\eta^{-1}\xi|^{Q+2s}}~d\xi d \eta  + 2\int_{\mc S } |v(\xi)-v(\eta)|^2 \mc K(\eta^{-1}\xi)~d\xi d \eta\\
 & \leq 2\left(\frac{cC_1}{\mu}+1\right) \int_{\mc S } |v(\xi)-v(\eta)|^2 \mc K(\eta^{-1}\xi)~d\xi d \eta\\
  & = C_2 \|v\|_{\mc Z_0}^2.
	\end{align*}
	 This proves the desired result  with $C= \sqrt{C_2}$. \QED		
\end{proof}

	\begin{Lemma}
		$\mc Z_0$ is a Hilbert space endowed with the following inner product
		\begin{align*}
		\ld u,v\rd =  \int_{\mc S} (u(\xi)-u(\eta))(v(\xi)-v(\eta)) \mc K(\eta^{-1}\xi )d \xi  d \eta \text{ for all  } u,v \in \mc Z_0
		\end{align*}
	\end{Lemma}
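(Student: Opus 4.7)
The plan is to verify that $\langle\cdot,\cdot\rangle$ is indeed an inner product whose induced norm coincides with $\|\cdot\|_{\mc Z_0}$ from \eqref{hf14}, and then to establish completeness by realizing $\mc Z_0$ as (isometric to) a closed subspace of $L^2(\mc S, d\xi d\eta)$ via the kernel-weighted difference map.

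The verification of the inner product axioms is routine. Bilinearity follows from linearity of the integrand in each argument, and symmetry is transparent from the expression $(u(\xi)-u(\eta))(v(\xi)-v(\eta))$. For positive definiteness, one notes that $\langle u,u\rangle = \|u\|_{\mc Z_0}^2$, and Lemma \ref{lemhf1}(iv) already established that $\|\cdot\|_{\mc Z_0}$ is a norm on $\mc Z_0$, so $\langle u,u\rangle = 0$ forces $u = 0$ a.e.\ in $\H$. Thus $\mc Z_0$ is a pre-Hilbert space whose induced norm is \eqref{hf14}.

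The nontrivial step is completeness. Let $\{u_n\} \subset \mc Z_0$ be Cauchy with respect to $\|\cdot\|_{\mc Z_0}$. By \eqref{hf13} the sequence is also Cauchy in $\|\cdot\|_{\mc Z}$, and hence Cauchy in $L^2(\Om)$. Let $u \in L^2(\Om)$ denote its limit, which we extend by zero to all of $\H$; this extension still satisfies $u = 0$ a.e.\ on $\Om^c$. Passing to a subsequence (not relabeled), we may assume $u_n \to u$ a.e.\ in $\H$, so that $u_n(\xi)-u_n(\eta) \to u(\xi)-u(\eta)$ a.e.\ on $\mc S$. Define
\begin{equation*}
F_n(\xi,\eta) := (u_n(\xi)-u_n(\eta))\sqrt{\mc K(\eta^{-1}\xi)}, \qquad (\xi,\eta) \in \mc S.
\end{equation*}
The map $u \mapsto F_u$ is an isometry from $\mc Z_0$ into $L^2(\mc S, d\xi d\eta)$, so $\{F_n\}$ is Cauchy in $L^2(\mc S)$ and converges to some $F \in L^2(\mc S)$. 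Extracting a further subsequence, $F_n \to F$ a.e.\ on $\mc S$.

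The remaining identification step is the main obstacle, and is handled as follows: since $\mc K(\eta^{-1}\xi) > 0$ for $\xi \neq \eta$, the a.e.\ pointwise limit of $F_n$ is uniquely determined by the a.e.\ pointwise limit of $u_n(\xi)-u_n(\eta)$, forcing $F(\xi,\eta) = (u(\xi)-u(\eta))\sqrt{\mc K(\eta^{-1}\xi)}$ a.e.\ on $\mc S$. This places $F_u \in L^2(\mc S)$, so $u \in \mc Z$, and combined with $u = 0$ a.e.\ on $\Om^c$ we obtain $u \in \mc Z_0$. Finally, $\|u_n - u\|_{\mc Z_0} = \|F_n - F\|_{L^2(\mc S)} \to 0$ along the subsequence; since the original sequence is Cauchy in $\mc Z_0$, the full sequence converges to $u$. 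This proves completeness and hence that $(\mc Z_0, \langle\cdot,\cdot\rangle)$ is a Hilbert space.
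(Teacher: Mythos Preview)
Your proof is correct. Both arguments begin the same way---use \eqref{hf13} to pass from $\|\cdot\|_{\mc Z_0}$-Cauchy to $L^2(\Om)$-Cauchy, obtain an $L^2$ limit $u$ extended by zero, and extract an a.e.\ convergent subsequence---but they diverge at the identification step. The paper works directly with Fatou's lemma: first applying it to $|u_n(\xi)-u_n(\eta)|^2\mc K(\eta^{-1}\xi)$ to conclude $\|u\|_{\mc Z_0}<\infty$, and then to $|(u_m-u_n)(\xi)-(u_m-u_n)(\eta)|^2\mc K(\eta^{-1}\xi)$ (letting $n\to\infty$) to bound $\|u_m-u\|_{\mc Z_0}$ by the Cauchy tail. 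You instead realize $\mc Z_0$ isometrically inside $L^2(\mc S)$ via $u\mapsto (u(\xi)-u(\eta))\sqrt{\mc K(\eta^{-1}\xi)}$, invoke completeness of $L^2(\mc S)$ to produce a limit $F$, and then identify $F$ pointwise using the strict positivity of $\mc K$. Your route is a bit more structural---it makes transparent that $\mc Z_0$ is a closed subspace of a known Hilbert space---while the paper's Fatou argument is slightly more self-contained and avoids the extra subsequence extraction for a.e.\ convergence of $F_n$. Both are standard and of comparable length.
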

\begin{proof}
	It is easy to prove that $\ld\cdot,\cdot\rd $ is an inner product space and lead to the norm defined in \eqref{hf14}.
	Now we prove that $\mc Z_0$ is a complete with respect to norm $\|\cdot\|_{\mc Z_0}$. Let $u_n$ be a cauchy sequence  in $\mc Z_0$. Hence for any $\e>0$ there exists $n_{\e}$ such that for all $n,m\geq n_{\e}$,
	\begin{align}\label{hf15}
	\|u_n-u_m\|_{L^2(\Om)}^2 \leq \|u_n-u_m\|_{\mc Z}^2 \leq C \|u_n-u_m\|_{\mc Z_0}^2 <\e.
	\end{align}
	Since $L^2(\Om)$ is a complete space, there exists $u_*\in L^2(\Om)$ such that $u_n\ra u_*$ in $L^2(\Om)$ as $n \ra \infty$. Up to a subsequence(denoted by $u_n$)  such that $u_n \ra u_*$ a.e.in $\H$. By using Fatou Lemma, we get
	\begin{align*}
	\|u_*\|^2_{\mc Z_0} & \leq \liminf_{n \ra \infty} \|u_n\|^2_{\mc Z_0} \leq \liminf_{n \ra \infty}\left( \|u_n-u_*\|_{\mc Z_0} +\|u_*\|_{\mc Z_0} \right)^2 < (1+\|u_*\|_{\mc Z_0})^2< \infty
	\end{align*}
	where we used \eqref{hf15} with $\e =1$. It implies $u_*\in \mc Z_0$.  Using \eqref{hf15}, with $m\geq n_{\e}$, we obtain
	\begin{align*}
	\|u_m-u_*\|^2_{\mc Z_0}\leq \|u_m-u_*\|^2_{\mc Z}\leq \liminf_{n \ra \infty} \|u_m-u_n\|^2_{\mc Z} \leq C\liminf_{n \ra \infty} \|u_m-u_n\|^2_{\mc Z_0}  \leq C\e.
	\end{align*}
	It implies that $u_m\ra u_*$ as $m\ra \infty$. Hence, we get the desired result. \QED	
\end{proof}

\begin{Lemma}\label{lemhf2}
	Let $u_n$ is a bounded sequence in $X_0$. Then there exists $u_* \in L^r(\H)$ such that, up to a subsequence, $u_n \ra u_*$ in $L^r(\H)$ as $n \ra \infty$ for all $r \in [1,Q^*)$.
\end{Lemma}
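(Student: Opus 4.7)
The plan is to combine the compact embedding of the fractional Sobolev space $W^{s,2}_0(\Omega)$ on the Heisenberg group into $L^p(\Omega)$ for subcritical $p$ with an interpolation argument to cover the full range $[1,Q^*)$, transporting everything back to $L^r(\mathbb H^N)$ by using that the $u_n$'s are supported in $\overline{\Omega}$.

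First I would invoke the bounds already available. Since $\{u_n\}$ is bounded in $\mathcal Z_0$, Lemma~\ref{lemhf1}(ii) gives that $\{u_n\}$ is bounded in $W^{s,2}(\mathbb H^N)$ with $u_n \equiv 0$ a.e.\ on $\Omega^c$, and Lemma~\ref{lemhf1}(iii) furnishes a uniform bound
\[
\|u_n\|_{L^{Q^*}(\Omega)} \;\le\; C\|u_n\|_{\mathcal Z_0}\;\le\; C'.
\]
Since $\mathcal Z_0$ is a Hilbert space, I may pass to a subsequence which converges weakly to some $u_* \in \mathcal Z_0$, and by the uniform $L^{Q^*}$ bound and Fatou's lemma, $u_* \in L^{Q^*}(\Omega)$ with $u_*=0$ a.e.\ on $\Omega^c$.

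Next I would apply the Rellich--Kondrachov-type compactness theorem for fractional Sobolev spaces on the Heisenberg group (available in the setting of \cite{arka}, which is already invoked in the proof of Lemma~\ref{lemhf1}(iii)): on the bounded extension domain $\Omega$, the inclusion $W^{s,2}_0(\Omega) \hookrightarrow L^2(\Omega)$ is compact. Thus, up to a further subsequence, $u_n \to u_*$ strongly in $L^2(\Omega)$. Because $u_n - u_*$ vanishes a.e.\ on $\Omega^c$, this is equivalent to strong convergence $u_n \to u_*$ in $L^2(\mathbb H^N)$.

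Finally I would use interpolation to reach every $r\in[1,Q^*)$. For $r\in[1,2]$, since $\Omega$ is bounded, Hölder's inequality immediately upgrades $L^2$-convergence to $L^r(\Omega)$-convergence, and hence to $L^r(\mathbb H^N)$ since the functions vanish outside $\Omega$. For $r\in(2,Q^*)$, writing $\tfrac{1}{r}=\tfrac{\theta}{2}+\tfrac{1-\theta}{Q^*}$ with $\theta\in(0,1)$, the standard Lyapunov interpolation inequality gives
\[
\|u_n-u_*\|_{L^r(\Omega)} \;\le\; \|u_n-u_*\|_{L^2(\Omega)}^{\theta}\,\|u_n-u_*\|_{L^{Q^*}(\Omega)}^{1-\theta},
\]
and the right-hand side tends to $0$ because the second factor is uniformly bounded while the first vanishes. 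The main obstacle here is really just the compact embedding in step two: the Euclidean Rellich--Kondrachov argument does not transfer verbatim, but the subcriticality $r<Q^*$ together with the compactness result in \cite{arka} (which yields compactness of $W^{s,2}_0(\Omega)\hookrightarrow L^2(\Omega)$ for bounded $\Omega\subset\mathbb H^N$) is exactly what makes the scheme work; everything else is standard interpolation and extension by zero.
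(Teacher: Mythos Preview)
Your proof is correct, and the core idea---apply the compact embedding result from \cite{arka} together with the fact that functions in $\mc Z_0$ vanish outside $\Omega$---is the same as the paper's. The paper is simply more direct: it invokes \cite[Theorem~1.4]{arka}, which already gives compactness of $W^{s,2}(\Omega)\hookrightarrow L^r(\Omega)$ for \emph{every} $r\in[1,Q^*)$ at once, so the weak-limit identification, the uniform $L^{Q^*}$ bound, and the interpolation step are all unnecessary. Your route (compact embedding only into $L^2$, then Lyapunov interpolation against the uniform $L^{Q^*}$ bound from Lemma~\ref{lemhf1}(iii)) is perfectly sound and has the minor conceptual advantage that it needs only the weakest instance of the compactness theorem; but given that the full subcritical compactness is available from \cite{arka}, the paper's two-line argument is the more economical choice.
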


\begin{proof}
	By Lemma \ref{lemhf1} (ii), $u_n \in W^{s,2}(\Om)$ and $u_n$ is a bounded sequence in $W^{s,2}(\Om)$. By using \cite[Theorem 1.4]{arka}, there exists $u_* \in L^r(\Om)$ such that $u_n\ra u_*$ in  $L^r(\Om)$ for any  $r \in [1,Q^*)$. Moreover,  $u_n =0$ a.e. in $\H\setminus\Om$ implies $u_* =0$ a.e. in $\H\setminus\Om$, that is,  $u_n\ra u_*$ in  $L^r(\H)$. \QED
\end{proof}
\section{Proof of Theorem \ref{thmhf1}}
In this section, we gave the proof of Theorem
\ref{thmhf1} using the Theorem \ref{thmhf2}.
For  $u, v \in \mc Z$, we define
$ g(u)(\xi,\eta) = (u(\xi)-u(\eta ))\sqrt{\mc K(\eta^{-1}\xi)} \in A,$ and
$ g(v)(\xi,\eta) = (v(\xi)-v(\eta ))\sqrt{\mc K(\eta^{-1}\xi)} \in A,$
and   $$\mf J(u,v)= \ds \int_{\mc S} (u(\xi)-u(\eta))(v(\xi)-v(\eta)) \mc K(\eta^{-1}\xi )d \xi  d \eta$$
\begin{Lemma}\label{lemhf3}
	The following holds
	\begin{enumerate}
		\item[(i)] $\mf J$ is well defined map.
		\item[(ii)] The assumptions \eqref{hf3} and \eqref{hf4} are satisfied.
	\end{enumerate}
\end{Lemma}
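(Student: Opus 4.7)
The plan is to split the lemma into (i) a one-line boundedness check and (ii) the verification of the four requirements in \eqref{hf3} and \eqref{hf4}, of which only the comparison inequality is substantive. For (i), note that $u,v\in\mc Z$ places $g(u)$ and $h(v)$ in $A=L^2(\mc S,d\xi d\eta)$ by the very definition of $\mc Z$; writing the integrand of $\mf J(u,v)$ as $g(u)\,h(v)$ and applying Cauchy--Schwarz yields $|\mf J(u,v)|\leq \|g(u)\|_{L^2(\mc S)}\|h(v)\|_{L^2(\mc S)}\leq \|u\|_{\mc Z}\|v\|_{\mc Z}<\infty$. The first line of \eqref{hf4} is a one-step computation: the additive constant cancels in the difference $u(\xi)-u(\eta)$, so $g(u+r)=g(u)$, and the linearity of $v\mapsto v(\xi)-v(\eta)$ gives $h(-v)=-h(v)$.

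For \eqref{hf3}, given $v\in\mc N_0:=\mc Z_0$ and $w\in\mc W$, I show $(v+w)^+\in\mc Z_0$. Outside $\Om$ we have $v\equiv 0$ and each $w\in\mc W$ is nonpositive ($u_0-\phi\leq 0$ by hypothesis; the constants in $\mc W$ are read as nonpositive so the closure property is meaningful), hence $(v+w)^+\equiv 0$ on $\Om^c$. The $L^2(\Om)$ bound is immediate, and the $\mc K$-weighted Gagliardo seminorm of $(v+w)^+$ is controlled, via the pointwise estimate $|a^+-b^+|\leq|a-b|$, by the corresponding seminorms of $v\in\mc Z_0$ and of $w$ (zero when $w$ is constant; $\|u_0-\phi\|_{\mc Z}<\infty$ when $w=u_0-\phi$).

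The only substantive step is the comparison property in the second line of \eqref{hf4}. Setting $\phi:=u-v$ and $w:=\phi^+\in\mc Z_0$, bilinearity of $\mf J$ converts the hypothesis into a one-sided bound on $\mf J(\phi,w)$. The crucial ingredient is the pointwise algebraic identity
\[
(a-b)(a^+-b^+)=(a^+-b^+)^2+a^+b^-+a^-b^+\geq (a^+-b^+)^2,\qquad a,b\in\mathbb{R},
\]
obtained by decomposing $a=a^+-a^-$, $b=b^+-b^-$ and using $a^+a^-=b^+b^-=0$. Integrating this inequality against $\mc K(\eta^{-1}\xi)\,d\xi\,d\eta$ over $\mc S$ delivers the energy lower bound $|\mf J(\phi,\phi^+)|\geq \|\phi^+\|_{\mc Z_0}^2$; combined with the hypothesised one-sided bound on $\mf J(\phi,w)$, this forces $\|w\|_{\mc Z_0}^2\leq 0$. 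Since $\|\cdot\|_{\mc Z_0}$ is a norm on $\mc Z_0$ by Lemma~\ref{lemhf1}(iv), $w\equiv 0$ in $\mathbb{H}^N$, i.e., $u\leq v$ a.e.\ in $\Om$.

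The principal obstacle is bookkeeping of the sign convention of $\mf J$: Section~2 defines $\mf J(\varphi,\psi)=-\int g(\varphi)h(\psi)$ with an explicit minus sign, whereas Section~3 writes $\mf J(u,v)=+\int(u(\xi)-u(\eta))(v(\xi)-v(\eta))\mc K(\eta^{-1}\xi)$. In order that the hypothesised inequality $\mf J(u,w)\geq \mf J(v,w)$ yields the useful estimate $\|w\|_{\mc Z_0}^2\leq 0$ rather than the vacuous $\geq 0$, one must fix either $h(v):=(v(\eta)-v(\xi))\sqrt{\mc K(\eta^{-1}\xi)}$ (so that $-\int g\,h=+\int(u(\xi)-u(\eta))(v(\xi)-v(\eta))\mc K$ as in Section~3) or the direction of the inequality in \eqref{hf4}. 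Once this is reconciled, the algebraic identity above drives the entire argument.
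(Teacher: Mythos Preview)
Your proof is correct and follows essentially the same approach as the paper: Cauchy--Schwarz for (i), the $1$-Lipschitz property of $t\mapsto t^+$ for \eqref{hf3}, and the pointwise identity $(a-b)(a^+-b^+)=(a^+-b^+)^2+a^+b^-+a^-b^+$ for the comparison step in \eqref{hf4}, concluding that $w^+\equiv 0$ via the norm on $\mc Z_0$ (the paper instead reads off that the integrand vanishes a.e.\ and then uses that $w^+\in\mc Z_0$ is constant, which is the same thing). Your remark on the sign convention of $\mf J$ is accurate; the paper's own computation tacitly uses the Section~2 form $\mf J=-\int g\,h$, which yields the sandwich $0\geq\cdots\geq 0$ exactly as you indicate.
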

\begin{proof}
	(i) By using the Cauchy-Schwarz inequality and the fact that $u, v \in \mc Z$ , we get
	\begin{align*}
	2 |u(\xi)-u(\eta )||v(\xi)-v(\eta )|\sqrt{\mc K(\eta^{-1}\xi)}\sqrt{\mc K(\eta^{-1}\xi)} \leq \left( |u(\xi)-u(\eta )|^2  + |v(\xi)-v(\eta )|^2 \right)\mc K(\eta^{-1}\xi).
	\end{align*}
	(ii) Let $v \in \mc Z_0$ and $w \in \mc Z$ with $w \leq 0$ a.e. in $\H$. The fact that  $v+w \in \mc Z$ implies $(v+w)^+ \in \mc Z$. Also $v+w(\xi)\leq v(\xi)=0$ a.e. in $\Om^c$, implies $(v+w)^+ =0$ in $\Om^c$. Therefore, $(v+w)^+ \in \mc Z_0$. It proves \eqref{hf3}.\\
	By the definition of $g$ and $h$, one can easily show that
	$g(u+r)= g(u), ~h(-u)= -h(u) \text{ for all } u \in \mc Z \text{ and } r \in \mathbb{R}$.  Let $ u,v \in \mc Z$ with $(u-v)^+ \in \mc Z_0$ and $\mf J(u,(u-v)^+) \geq \mf J(v,(u-v)^+)$. Set $ w = u-v$  and $w= w^+-w^-$. Consider
	\begin{align}\label{hf12}
	(w(\xi)-w(\eta))(w^+(\xi)-w^+(\eta))
	= (w^+(\xi)-w^+(\eta))^2+ w^-(\xi)w^+(\eta)+ w^+(\xi)w^-(\eta)
	\end{align}
	By using \eqref{hf18} and \eqref{hf12}, we have
	\begin{align*}
	0&\geq  \mf J(v,(u-v)^+)- \mf J(u,(u-v)^+)\\
	&   = \ds \int_{\mc S} (w(\xi)-w(\eta))(w^+(\xi)-w^+(\eta)) \mc K(\eta^{-1}\xi )d \xi  d \eta\\
	&   =\ds \int_{\mc S} \left((w^+(\xi)-w^+(\eta))^2+ w^-(\xi)w^+(\eta)+ w^+(\xi)w^-(\eta)\right)  \mc K(\eta^{-1}\xi )d \xi  d \eta \geq 0.
	\end{align*}
	It implies $(w^+(\xi)-w^+(\eta))^2+ w^-(\xi)w^+(\eta)+ w^+(\xi)w^-(\eta)=0$ a.e. in $\mc S$, that is, $w^+(\xi)=w^+(\eta) $ a.e in $\mc S$. Let $w^+ = c \geq 0 $ a.e. in $ \H$ but $w^+ \in \mc Z_0$  implies $ c=0$. Hence $(u-v)^+= 0 $, that is, $u\leq v $ a.e. in $\H$. Thus, \eqref{hf4} is satisfied. \QED
\end{proof}
 We set two functions $u_0 \in \mc Z \cap L^\infty(\Om^c)$ and $\phi \in \mc Z$ with $u_0 \leq \phi$ a.e. in $\H$.
Assume $\mf L_{\mc K} \phi,f \in L^\infty(\Om)$. For a.e $\xi \in\Om$ and $l \in \mathbb{R}$, define  $T= (\mf L_{\mc K} \phi+f)^+ \in L^\infty(\Om)$ and $w_r(\xi,l)= T(\xi)(1-D_r(\phi(\xi)-l))-f(\xi)$   where $D_r$ is defined in \eqref{hf16} and $r \in (0,1)$.

\begin{Proposition}\label{prophf2}
	Let $r \in (0,1)$. Then there exists  $u_r \in \mc Z$ such that is a solution to $\mf L_{\mc K}u_r= w_r(\xi,u_r)$ and  $u_r=u_0 \in \Om^c$. That is,  for all   $v \in \mc Z_0$
	\begin{align}\label{hf17}
	\left\{
	\begin{array}{ll}
	\ds \int_{\mc S} (u_r(\xi)-u_r(\eta))(v(\xi)-v(\eta)) \mc K(\eta^{-1}\xi )d \xi  d \eta  + \ds\int_{\Om} w_r(\xi,u_r(\xi))v  d \xi =0 \\
	u_r\in \mc Z, u_r-u_0\in \mc Z_0.
	\end{array}
	\right.
	\end{align}
\end{Proposition}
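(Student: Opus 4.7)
My plan is to solve \eqref{hf17} by the direct method of the calculus of variations, applied to a translated problem on the Hilbert space $\mc Z_0$. Since $u_0 \in \mc Z$ is fixed, I write $u_r = u_0 + z$ with $z \in \mc Z_0$ the new unknown. Using the inner product $\ld \cdot , \cdot \rd$ on $\mc Z_0$, equation \eqref{hf17} is equivalent to finding $z \in \mc Z_0$ with
\begin{equation*}
\ld z , v \rd + \int_{\Om} w_r(\xi, u_0(\xi) + z(\xi))\, v(\xi)\, d\xi = - \ld u_0 , v \rd \qquad \text{for all } v \in \mc Z_0,
\end{equation*}
where $v \mapsto \ld u_0 , v \rd$ is a bounded linear functional on $\mc Z_0$ by Cauchy--Schwarz together with $\|u_0\|_{\mc Z} < \infty$.

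Two elementary properties of $w_r$ drive the argument. Since $0 \leq D_r \leq 1$ and $T \geq 0$, one has $|w_r(\xi, l)| \leq \|T\|_{L^\infty(\Om)} + \|f\|_{L^\infty(\Om)} =: M_r$ uniformly in $l$, and $l \mapsto w_r(\xi, l)$ is non-decreasing because $\partial_l w_r(\xi,l) = T(\xi)\, D_r'(\phi(\xi) - l) \geq 0$. Setting $W_r(\xi, l) = \int_0^l w_r(\xi, t)\, dt$, so that $|W_r(\xi, l)| \leq M_r|l|$, I then consider the functional
\begin{equation*}
I(z) = \tfrac12 \|z\|_{\mc Z_0}^2 + \ld u_0 , z \rd + \int_{\Om} W_r(\xi, u_0 + z)\, d\xi \qquad \text{on } \mc Z_0.
\end{equation*}
A routine verification shows $I \in C^1(\mc Z_0, \mathbb{R})$ and $I'(z)[v]$ coincides with the left-hand minus the right-hand side of the equivalent equation above; hence critical points of $I$ are exactly the desired solutions.

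To apply the direct method, coercivity follows from
\begin{equation*}
I(z) \geq \tfrac12 \|z\|_{\mc Z_0}^2 - \|u_0\|_{\mc Z}\|z\|_{\mc Z_0} - M_r |\Om|^{1/2}\bigl(\|u_0\|_{L^2(\Om)} + \|z\|_{L^2(\Om)}\bigr),
\end{equation*}
combined with the continuous embedding $\mc Z_0 \hookrightarrow L^2(\Om)$ furnished by Lemma \ref{lemhf1}. Weak lower semicontinuity of $\tfrac12 \|\cdot\|_{\mc Z_0}^2 + \ld u_0, \cdot \rd$ is standard by convexity and continuity. For the nonlinear term, if $z_n \rp z$ in $\mc Z_0$, Lemma \ref{lemhf2} gives $z_n \to z$ in $L^2(\Om)$, hence a.e.\ in $\Om$ up to a subsequence, and the dominated convergence theorem with envelope $M_r(|u_0| + \sup_n |z_n|) \in L^2(\Om)$ allows passage to the limit in $\int_\Om W_r(\xi, u_0 + z_n)\, d\xi$. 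Therefore $I$ attains its infimum at some $z^\ast \in \mc Z_0$, and $u_r := u_0 + z^\ast$ answers the proposition.

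The main subtlety is bookkeeping rather than analytic: $u_0$ lies in $\mc Z$ but not in $\mc Z_0$, so one cannot directly minimize over the class of admissible $u_r$. The substitution $u_r = u_0 + z$ is designed precisely to isolate $u_0$ into a bounded linear functional $\ld u_0, \cdot \rd$ on $\mc Z_0$ and into an additive shift inside the bounded nonlinearity $w_r$, neither of which disturbs the coercivity or weak-lower-semicontinuity arguments above.
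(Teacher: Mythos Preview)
Your argument is correct and is essentially the paper's own proof: both minimize the energy $\tfrac12\|\cdot\|^2+\int_\Om W_r$ via the direct method, using the uniform bound $|w_r|\le \|T\|_{L^\infty}+\|f\|_{L^\infty}$ for coercivity and Lemma~\ref{lemhf2} together with dominated convergence to pass to the limit. The only difference is that the paper minimizes directly over the affine set $\mc Z_{u_0}=\{u\in\mc Z: u-u_0\in\mc Z_0\}$ while you translate by $u_0$ to work on $\mc Z_0$; since the two functionals differ by the constant $\tfrac12\|u_0\|_{\mc Z_0}^2$, this is a purely cosmetic distinction (and your remark on the monotonicity of $l\mapsto w_r(\xi,l)$ is correct but not actually used).
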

\begin{proof}
	Consider the space $\mc Z_{u_0}=\{u \in \mc Z~:~ u-u_0 \in \mc Z_0 \}$ and the functional $I_r:\mc Z_{u_0}\ra \mathbb{R}$ defined as
	\begin{align*}
	I_r(u)= \frac12 \int_{\mc S} |u(\xi)-u(\eta)|^2\mc K(\eta^{-1}\xi)~d\xi d \eta + \ds\int_{\Om} W_r(\xi,u_r(\xi))  d \xi
	\end{align*}
	where $W_r$ is the primitive of $w_r$.  Clearly, by definition of $D_r$,
	 \begin{align*}
		|w_r|& \leq \|T\|_{L^\infty(\Om)}\|(1-D_r(\phi(\xi)-l))\|_{L^\infty(\Om)}+\|f\|_{L^\infty(\Om)}\\
		&\leq  \|T\|_{L^\infty(\Om)}+\|f\|_{L^\infty(\Om)}:= \zeta.
	\end{align*}
	It implies $|W_r(\xi,u(\xi)|\leq \zeta |u(\xi)|$. Employing the Young's inequality, Minkowski inequality, H\"older's inequality, Lemma \ref{lemhf1}(iii) and \eqref{hf18}, we deduce that
	\begin{equation}\label{hf19}
\begin{aligned}
	I_r(u)& \geq \frac12  \int_{\mc S} |u(\xi)-u(\eta)|^2\mc K(\eta^{-1}\xi)~d\xi d \eta -\zeta \int_{\Om} |u(\xi)|~d \xi \\
	& \geq \frac12  \int_{\mc S} |u(\xi)-u(\eta)|^2\mc K(\eta^{-1}\xi)~d\xi d \eta -\frac{\zeta\e}{2} \|u\|^2_{L^2(\Om)}  -\frac{\zeta}{2\e}|\Om|\\
	& \geq \frac{\mu}{2}  \int_{\mc S} \frac{|u(\xi)-u(\eta)|^2}{|\eta^{-1}\xi|^{Q+2s}}\mc ~d\xi d \eta -\zeta\e \|u-u_0\|^2_{L^2(\Om)} -\zeta\e \|u_0\|^2_{L^2(\Om)}  -\frac{\zeta}{2\e}|\Om|\\
	& \geq \mu \|u-u_0\|^2_{\mc Z_0} -\mu \|u_0\|^2_{\mc Z_0} -\zeta\e C|\Om|^{\frac{Q^*-2}{2}}\|u-u_0\|^2_{\mc Z_0} -\zeta\e \|u_0\|^2_{L^2(\Om)}  -\frac{\zeta}{2\e}|\Om|\\
	& = \left(\mu -\zeta\e C|\Om|^{\frac{Q^*-2}{2}}\right) \|u-u_0\|^2_{\mc Z_0}-\mu \|u_0\|^2_{\mc Z_0}  -\zeta\e \|u_0\|^2_{L^2(\Om)} -\frac{\zeta}{2\e}|\Om|.
	\end{aligned}
		\end{equation}
Using the fact that $u_0 \in\mc Z$ and the properties of $\mc K$, we get $\|u_0\|^2{\mc Z_0}, \|u_0\|^2_{L^2(\Om)}<\infty$. Now choosing $\e>0$ such that $\mu -\zeta\e C|\Om|^{\frac{Q^*-2}{2}}>0$, we deduce that
\begin{align*}
I_r(u)& \geq -\mu \|u_0\|^2_{\mc Z_0}  -\zeta\e \|u_0\|^2_{L^2(\Om)} -\frac{\zeta}{2\e}|\Om|>-\infty.
\end{align*}
It implies that  $\ds \inf_{u \in \mc Z_{u_0} }	I_r(u)>-\infty$. Now let $u_n\in \mc Z_{u_0}$ be the minimizing sequence for $I_r$ then $I_r(u_n)\ra \ds \inf_{u \in \mc Z_{u_0} }	I_r(u)$.  Thus from \eqref{hf19} with $u=u_n$ we get $u_n-u_0$ is a bounded sequence is $X_0$. From Lemma \ref{lemhf2}, there exists $u^* \in X_0$ such tha, up to a subsequence,  $u_n-u_0 \ra u^*$ in $L^{\nu}$ for $\nu \in [1,Q^*)$ and $u_n-u_0\ra u^*$ a.e. in $\H$.  Thus $u^* \in \mc Z_0$. Define $u_*= u_0+u^* $. Then $u_* \in \mc Z_{u_0}$. Now using the continuity of the map $i\mapsto W_r(\xi,i)$ for all $\xi \in \H$ and $i \in \mathbb{R}$  and the dominated convergence theorem, we get
\begin{align*}
\lim_{n\ra \infty} I_r(u_n) \geq \frac12 \int_{\mc S} |u_*(\xi)-u_*(\eta)|^2\mc K(\eta^{-1}\xi)~d\xi d \eta+ \int_{\Om} W_r(\xi,u_*(\xi))~d\xi= I_r(u_*).
\end{align*}
Hence $I_r(u_*) =  \ds \inf_{u \in \mc Z_{u_0} }	I_r(u) $. It implies \eqref{hf17} has a solution. \QED
\end{proof}

\begin{Proposition}\label{prophf1}
	Let $v \in C_0^2(\Om)$ and $u \in \mc Z\cap L^\infty(\Om^c)$.  Then
\begin{equation}\label{hf27}
	\begin{aligned}
	 \int_{\mc S} (u(\xi)-u(\eta))& (v(\xi)-v(\eta)) \mc K(\eta^{-1}\xi )d \xi  d \eta\\
	 &  =  \int_{\HH} u(\xi)(2v(\xi)-v(\xi \tilde{\xi})- v(\xi\tilde{\xi}^{-1})) \mc K(\tilde{\xi} )d \xi  d \tilde{\xi}.
	\end{aligned}
	\end{equation}
\end{Proposition}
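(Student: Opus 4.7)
The approach is to reduce the identity to a symmetric change-of-variables computation, after a truncation that sidesteps the non-integrability of $\mc K$ at the origin.

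First, I would verify that both sides of \eqref{hf27} are absolutely convergent. For the LHS this follows from Cauchy-Schwarz, pairing $(u(\xi)-u(\eta))\sqrt{\mc K(\eta^{-1}\xi)}\in L^2(\mc S)$ (from $u\in\mc Z$) against $(v(\xi)-v(\eta))\sqrt{\mc K(\eta^{-1}\xi)}\in L^2(\HH)$ (since $v\in C^2_0(\Om)\subset\mc Z$, by the earlier lemma); because $v\equiv 0$ on $\Om^c$, the integrand vanishes on $\Om^c\times\Om^c$, so $\mc S$ may be replaced by $\HH$ without changing the value. For the RHS, the bound $|2v(\xi)-v(\xi\tilde\xi)-v(\xi\tilde\xi^{-1})|\leq C_v\,\theta(\tilde\xi)$, obtained by Taylor-expanding $v\in C^2$ about $\xi$ (the first-order terms cancel because the group increments $\xi\tilde\xi-\xi$ and $\xi\tilde\xi^{-1}-\xi$ sum to zero) and using $\|v\|_\infty$ for $|\tilde\xi|\geq 1$, together with $\theta\mc K\in L^1(\H)$, the compact support of $v$, and the bounds on $u\in\mc Z\cap L^\infty(\Om^c)$, yields $L^1(\HH)$-integrability.

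Next, fix $\e>0$ and let $I_\e$ denote the LHS integral restricted to $\{|\eta^{-1}\xi|\geq\e\}$; on that region $\mc K$ itself lies in $L^1$, so I may legitimately expand $(u(\xi)-u(\eta))(v(\xi)-v(\eta))$ as $u(\xi)(v(\xi)-v(\eta))-u(\eta)(v(\xi)-v(\eta))$ and handle each piece separately. Swapping $\xi\leftrightarrow\eta$ in the second piece, using $\mc K(\xi^{-1}\eta)=\mc K(\eta^{-1}\xi)$ and inversion invariance of the homogeneous norm, gives $I_\e = 2\int_{|\eta^{-1}\xi|\geq\e} u(\xi)(v(\xi)-v(\eta))\mc K(\eta^{-1}\xi)\,d\xi\,d\eta$. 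Now apply the substitution $\eta=\xi\tilde\xi^{-1}$ (i.e.\ $\tilde\xi=\eta^{-1}\xi$), whose Jacobian is $1$ because Lebesgue measure is bi-invariant Haar measure on the nilpotent, unimodular Heisenberg group; this produces $I_\e = 2\int_{|\tilde\xi|\geq\e} u(\xi)(v(\xi)-v(\xi\tilde\xi^{-1}))\mc K(\tilde\xi)\,d\xi\,d\tilde\xi$. A further substitution $\tilde\xi\mapsto\tilde\xi^{-1}$, which is measure-preserving and leaves both $\mc K(\tilde\xi)$ and the truncation region invariant, shows $I_\e = 2\int_{|\tilde\xi|\geq\e} u(\xi)(v(\xi)-v(\xi\tilde\xi))\mc K(\tilde\xi)\,d\xi\,d\tilde\xi$ as well. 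Averaging these two equivalent expressions yields $I_\e = \int_{|\tilde\xi|\geq\e} u(\xi)(2v(\xi)-v(\xi\tilde\xi)-v(\xi\tilde\xi^{-1}))\mc K(\tilde\xi)\,d\xi\,d\tilde\xi$, which is precisely the $\e$-truncation of the RHS.

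Finally, dominated convergence (using the $L^1$ bounds from the first paragraph) lets $\e\to 0^+$ on both sides and delivers \eqref{hf27}. The principal obstacle is precisely the non-integrability of $\mc K$ at the origin: a direct expansion of the product would produce individually divergent pieces such as $\int u(\xi)v(\xi)\mc K(\eta^{-1}\xi)\,d\xi\,d\eta$, so everything must be done on the truncated region and only closed up by passing to the limit at the end. Verifying the $L^1$-control on the RHS is also a bit delicate because the support set $K\tilde\xi^{\mp 1}$ (with $K=\mathrm{supp}\,v$) migrates across $\H$ as $\tilde\xi$ varies, forcing a combined use of the $L^2(\Om)$-bound on $u$ inside $\Om$ and the $L^\infty(\Om^c)$-bound outside.
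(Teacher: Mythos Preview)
Your proof is correct and follows essentially the same strategy as the paper: truncate away from the diagonal, split the product and apply the substitutions $\eta=\xi\tilde\xi^{\pm 1}$ to reach the symmetric second difference, then invoke dominated convergence after verifying $L^1$-control of the right-hand side via the Taylor/$L^\infty$ bound together with the support considerations you sketch at the end. Your replacement of $\mc S$ by $\HH$ at the outset (using $v|_{\Om^c}=0$) is a mild streamlining over the paper's explicit tracking of the sets $\mf E_\de^{\pm}$, and the $L^1$-check you outline is in fact a cleaner packaging of the paper's $\mf R,\mf R_*$ computation.
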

\begin{proof}
	Let
	\begin{align*}
	& \mf E_{0}= \mc S= \HH\setminus(\Om^c\times \Om^c),\\
	& \mf E_{\de} = \{(\xi,\eta) \in \mf E_0~:~ |\eta^{-1}\xi|\geq \de  \}\\
	& \mf E_{\de}^{+} = \{(\xi,\tilde{\xi}) \in \HH~:~  (\xi,\xi\tilde{\xi}) \in \mf E_0
	\text{ and } |\tilde{\xi}|\geq \de  \}\\
	& \mf E_{\de}^{-} = \{(\xi,\tilde{\xi}) \in \HH~:~   (\xi,\xi\tilde{\xi}^{-1}) \in \mf E_0 \text{ and } |\tilde{\xi}|\geq \de  \}
		\end{align*}
		\textbf{Claim:} $(\xi,\eta)\mapsto u(\xi)(v(\xi)-v(\eta)) \mc K(\eta^{-1}\xi ) \in L^1(\mf E_{\de}d\xi d\eta)$. \\
	For $\de$ small and $|\eta^{-1}\xi|\geq \de$, we have  $\theta (\eta^{-1}\xi) \geq \de^2$, and using properties of $\mc K$ and the fact that $ v=0$ in $\Om^c$, we deduce that
\begin{align*}
\int_{\mf E_{\de}} |u(\xi)|& |v(\xi)-v(\eta)| \mc K(\eta^{-1}\xi )d \xi  d \eta\\
 & = \ds \int_{\{\Om \times \H \}\cap \{|\eta^{-1}\xi|\geq \de\}} |u(\xi)||v(\xi)-v(\eta)| \mc K(\eta^{-1}\xi )d \xi  d \eta\\
& \qquad + \int_{\{\Om \times \Om^c \}\cap \{|\eta^{-1}\xi|\geq \de\}} |u(\eta)||v(\xi)-v(\eta)| \mc K(\eta^{-1}\xi )d \xi  d \eta\\
&\leq  2 \|v\|_{L^\infty(\H)} \left(\ds \int_{\{\Om \times \H \}\cap \{|\eta^{-1}\xi|\geq \de\}} |u(\xi)||v(\xi)-v(\eta)| \mc K(\eta^{-1}\xi )d \xi  d \eta \right.\\
&\left.\hspace{4cm} + \int_{\{\Om \times \Om^c \}\cap \{|\eta^{-1}\xi|\geq \de\}} |u(\eta)||v(\xi)-v(\eta)| \mc K(\eta^{-1}\xi )d \xi  d \eta
\right)\\
& \leq \frac{2 \|v\|_{L^\infty(\H)}}{\de^2} \left(\ds \int_{\{\Om \times \H \}\cap \{|\eta^{-1}\xi|\geq \de\}} |u(\xi)|\theta (\eta^{-1}\xi) \mc K(\eta^{-1}\xi )d \xi  d \eta \right.\\
&\left.\hspace{4cm} + \int_{\{\Om \times \Om^c \}\cap \{|\eta^{-1}\xi|\geq \de\}} |u(\eta)|\theta (\eta^{-1}\xi) \mc K(\eta^{-1}\xi )d \xi  d \eta
\right)\\
&\leq \frac{2 \|v\|_{L^\infty(\H)}}{\de^2} \left(\ds \int_{\{\Om \times \H \}} |u(\xi)|\theta (\tilde{\xi}) \mc K(\tilde{\xi})d \xi  d \tilde{\xi}  + \|u\|_{L^\infty(\Om^c)} \int_{\{\Om \times \Om^c \}} \theta (\tilde{\xi}) \mc K(\tilde{\xi} )d \xi  d \tilde{\xi}
\right)\\
&\leq  \frac{2 \|v\|_{L^\infty(\H)}}{\de^2}\left(\|u\|_{L^1(\Om)}+|\Om|\|u\|_{L^\infty(\Om^c)} \right)\ds \int_{\H }\theta (\tilde{\xi}) \mc K(\tilde{\xi})d \tilde{\xi}  <\infty.
\end{align*}
Hence the claim. Similarly, $(\xi,\eta)\mapsto u(\eta)(v(\xi)-v(\eta)) \mc K(\eta^{-1}\xi ) \in L^1(\mf E_{\de})$. It implies that
\begin{equation}\label{hf20}
\begin{aligned}
& \int_{\mf E_{\de}} (u(\xi)-u(\eta))(v(\xi)-v(\eta)) \mc K(\eta^{-1}\xi )d \xi  d \eta \\
& = \int_{\mf E_{\de}} u(\xi)(v(\xi)-v(\eta)) \mc K(\eta^{-1}\xi )d \xi  d \eta+ \int_{\mf E_{\de}} u(\xi)(v(\xi)-v(\eta)) \mc K(\xi^{-1}\eta )d \xi  d \eta\\
& = \int_{\mf E_{\de}^+} u(\xi)(v(\xi)-v(\xi\tilde{\xi})) \mc K(\tilde{\xi}^{-1})d \xi  d \tilde{\xi}+  \int_{\mf E_{\de}^-} u(\xi)(v(\xi)-v(\xi\tilde{\xi}^{-1})) \mc K(\tilde{\xi}^{-1})d \xi  d \tilde{\xi}\\
& = \int_{\mf E_{\de}^+} u(\xi)(v(\xi)-v(\xi\tilde{\xi})) \mc K(\tilde{\xi})d \xi  d \tilde{\xi}+  \int_{\mf E_{\de}^-} u(\xi)(v(\xi)-v(\xi\tilde{\xi}^{-1})) \mc K(\tilde{\xi})d \xi  d \tilde{\xi}
\end{aligned}
\end{equation}
Notice that if $(\xi, \tilde{\xi}) \in \mf E_{\de}^-\setminus E_{\de}^+$  then $(\xi,\xi\tilde{\xi}) \in \Om^c\times\Om^c$. It implies that
\begin{align*}
\int_{\mf E_{\de}^{-}\setminus \mf E_{\de}^+} u(\xi)(v(\xi)-v(\xi\tilde{\xi})) \mc K(\tilde{\xi})d \xi  d \tilde{\xi}=0
\end{align*}
	Similarly, 	$\ds \int_{\mf E_{\de}^{+}\setminus \mf E_{\de}^-}  u(\xi)(v(\xi)-v(\xi\tilde{\xi}^{-1})) \mc K(\tilde{\xi})d \xi  d \tilde{\xi}=0$. Using this with \eqref{hf20}, we obtain
	\begin{equation}\label{hf20unu}
	\begin{aligned}
	& \int_{\mf E_{\de}} (u(\xi)-u(\eta))(v(\xi)-v(\eta)) \mc K(\eta^{-1}\xi )d \xi  d \eta \\
		& = \int_{\mf E_{\de}^+\cup \mf E_{\de}^-} u(\xi)(v(\xi)-v(\xi\tilde{\xi})) \mc K(\tilde{\xi})d \xi  d \tilde{\xi}+  \int_{ \mf E_{\de}^+\cup\mf E_{\de}^-} u(\xi)(v(\xi)-v(\xi\tilde{\xi}^{-1})) \mc K(\tilde{\xi})d \xi  d \tilde{\xi}\\
		& = \int_{\mf E_{\de}^+\cup \mf E_{\de}^-} u(\xi)(2v(\xi)-v(\xi\tilde{\xi})-v(\xi\tilde{\xi}^{-1})) \mc K(\tilde{\xi})d \xi  d \tilde{\xi}
	\end{aligned}
	\end{equation}
	We now show that $\mf V(\xi,\tilde{\xi}):= (2v(\xi)-v(\xi\tilde{\xi})-v(\xi\tilde{\xi}^{-1}))\mc K(\tilde{\xi})  \in L^1(\HH)$. \\
	Notice that  $|\mf V(\xi,\tilde{\xi})|\leq 4\|v\|_{L^\infty(\H)} \mc K(\tilde{\xi}) $ and, by Taylor expansion
	\begin{align*}
|\mf V(\xi,\tilde{\xi})|\leq \|D^2v\|_{L^\infty(\H)}|\tilde{\xi}|^2 \mc K(\tilde{\xi}).
	\end{align*}
	It implies that
	\begin{align*}
	|\mf V(\xi,\tilde{\xi})|\leq 4\|v\|_{C^2(\H)}\theta(\tilde{\xi})\mc K(\tilde{\xi}).
	\end{align*}
Now we first recall the triangle inequality which states that  there exists $\underline{c}<1$ such that
	\begin{align}\label{hf21}
	\underline{c} \big||\xi|-|\eta|\big|\leq |\xi\eta| \text{ for all } \xi,\eta \in \H.
	\end{align}
	Choose $R>1$ such that $\Om \subset B_{\underline{c}R} $. If $ \xi \in (B_{2R})^c, \xi\tilde{\xi} \in\Om $ and $\xi\tilde{\xi}^{-1} \in \Om \subset B_{\underline{c}R}$ then by \eqref{hf21}, we get
	\begin{align}\label{hf22}
	|\tilde{\xi}|\geq |\xi|-\frac{1}{\underline{c}}|\xi\tilde{\xi}| \geq 2R-\frac{1}{\underline{c}} \underline{c}R = R>1.
	\end{align}
	Define
	\begin{align*}
& \mf R = \left\{ (\xi,\tilde{\xi}) \in \HH~:~ \xi \in (B_{2R}(0))^c \text{ and }  \left(\tilde{\xi} \in B_{\underline{c}R}(\xi^{-1})\text{ or } \tilde{\xi}^{-1} \in B_{\underline{c}R}(\xi^{-1}) \right)   \right\}\\
& \mf R_* = \left\{ (\xi,\tilde{\xi}) \in \HH~:~ \xi^{-1} \in B_{\underline{c}R}(\tilde{\xi}^{-1}) \cup B_{\underline{c}R}(\tilde{\xi}) \text{ and }  \tilde{\xi} \in (B_{1}(0))^c  \right\}.
	\end{align*}
	Let $ (\xi,\tilde{\xi}) \in  \mf R$ then
	\begin{align}\label{hf23}
	(|\xi|>2R \text{ and } |\xi\tilde{\xi}|<\underline{c}R) \text{ or }   (|\xi|>2R \text{ and } |\xi\tilde{\xi}^{-1}|<\underline{c}R).
	\end{align}
	Taking into account  \eqref{hf22} and \eqref{hf23}, we deduce that either  $|\tilde{\xi}|>1$ and $ \xi^{-1} \in  B_{\underline{c}R}(\tilde{\xi}) $ or  $|\tilde{\xi}|>1$ and $ \xi^{-1} \in  B_{\underline{c}R}(\tilde{\xi}^{-1}) $. It implies that $\mf R\subset \mf R_*$. Furthermore,  if $ (\xi,\tilde{\xi}) \in  \mf R_*$ then
	\begin{align}\label{hf24}
	\mc K(\tilde{\xi})= \theta(\tilde{\xi})\mc K(\tilde{\xi})
	\end{align}
	Now let $(\xi,\tilde{\xi}) \in ((B_{2R}(0))^c\times\H )\setminus \mf R$. Then $\xi \in (B_{2R}(0))^c,~ \tilde{\xi} \in \H,~ (\xi,\tilde{\xi}) \not \in \mf R$. That is, $\xi \in (B_{2R}(0))^c,~ |\xi \tilde{\xi}|>\underline{c}R,$ and $|\xi \tilde{\xi}^{-1}|>\underline{c}R$. As a result
\begin{align}\label{hf25}
v(\xi)=v(\xi \tilde{\xi})= v(\xi \tilde{\xi}^{-1})=0 \text{ for all } (\xi,\tilde{\xi}) \in ((B_{2R}(0))^c\times\H )\setminus \mf R.
\end{align}
	Using \eqref{hf24}, \eqref{hf25}  and definition of $\mc K$, we get
	\begin{equation}\label{hf26}
	\begin{aligned}
	\int_{(B_{2R}(0))^c\times\H} |\mf V(\xi,\tilde{\xi})|&d\xi d\tilde{\xi}
	= 	\int_{\mf R_*} |2v(\xi)-v(\xi\tilde{\xi})-v(\xi\tilde{\xi}^{-1})| \mc K(\tilde{\xi})d\xi d\tilde{\xi}\\
	&  \leq 4\|v\|_{L^\infty(\H)}\int_{\mf R_*}  \mc K(\tilde{\xi})d\xi d\tilde{\xi}  = 4\|v\|_{L^\infty(\H)}\int_{\mf R_*}  \theta(\tilde{\xi})\mc K(\tilde{\xi})d\xi d\tilde{\xi}\\
	& \leq C(Q,R) \|v\|_{L^\infty(\H)}\int_{\H}  \theta(\tilde{\xi})\mc K(\tilde{\xi})d\xi d\tilde{\xi}= C(Q,R,\mc K)<\infty .
	\end{aligned}
	\end{equation}
	Consider
	\begin{equation}\label{hf26bis}
	\begin{aligned}
	\int_{\HH} |\mf V(\xi,\tilde{\xi})|&d\xi d\tilde{\xi}
	\leq \int_{B_{2R}\times \H} |\mf V(\xi,\tilde{\xi})|d\xi d\tilde{\xi}+  C(Q,R,\mc K)\\
	& \leq C(Q,R,\mc K)+ 4\|v\|_{C^2(\H)}\int_{B_{2R}\times \H} \theta(\tilde{\xi})\mc K(\tilde{\xi}) d\xi d\tilde{\xi}\\
	& =  C(Q,R,\mc K)+ 4\|v\|_{C^2(\H)}|B_{2R}|\int_{ \H} \theta(\tilde{\xi})\mc K(\tilde{\xi}) d\xi d\tilde{\xi}  <\infty .
	\end{aligned}
	\end{equation}
	Hence we show that $ \mf V \in L^1(\HH)$. Taking  into account  Lemma \ref{lemhf3} (i),  \eqref{hf26bis}, and passing limit $\de \ra 0 $  in \eqref{hf20unu}, we get
		\begin{equation}\label{hf20doi}
	\begin{aligned}
	 \int_{\mf E_{0}} (u(\xi)-u(\eta))& (v(\xi)-v(\eta)) \mc K(\eta^{-1}\xi )d \xi  d \eta \\
& = \int_{\mf E_{0}^+\cup \mf E_{0}^-}  u(\xi)(2v(\xi)-v(\xi\tilde{\xi})-v(\xi\tilde{\xi}^{-1})) \mc K(\tilde{\xi})d \xi  d \tilde{\xi}.
	\end{aligned}
	\end{equation}
Using the definition of $\mf E_0^+ $ and $\mf E_0^-$ and the fact that $v=0$ in $\H\setminus\Om$,  we get \eqref{hf27}. \QED	
\end{proof}

\begin{Lemma}\label{lemhf4}
	Let $\psi \in C^2_0(\Om)$ and $u_n \in \mc Z$ be a sequence converging uniformly to $u_* \in \mc Z$ as $n \ra \infty$ and $u_n-u_* \in \mc Z_0$. Then
\begin{equation}\label{hf27unu}
\begin{aligned}
\lim_{n\ra \infty} \int_{\mc S} (u_n(\xi)-u_n(\eta))& (v(\xi)-v(\eta)) \mc K(\eta^{-1}\xi )d \xi  d \eta\\
&  =  \int_{\mc S} (u_*(\xi)-u_*(\eta)) (v(\xi)-v(\eta)) \mc K(\eta^{-1}\xi )d \xi  d \eta
\end{aligned}
\end{equation}
\end{Lemma}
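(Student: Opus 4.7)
The plan is to reduce the identity \eqref{hf27unu} to an absolutely convergent integral in which the uniform convergence of $u_n$ lets me pass to the limit directly. The whole proof consists in invoking Proposition \ref{prophf1} on both $u_n$ and $u_*$ and then dominating the difference. Since $u_n \ra u_*$ uniformly on $\H$, the sequence $(u_n)$ is uniformly bounded on $\H$; in particular, $u_n,u_*\in \mc Z\cap L^\infty(\Om^c)$, so Proposition \ref{prophf1} is applicable to each. Moreover, $v=\psi\in C^2_0(\Om)\subset C^2_0(\H)$, which is the required regularity on the test function.

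Applying Proposition \ref{prophf1} to $u_n$ and to $u_*$ (with test function $v$) and subtracting gives
\begin{align*}
& \int_{\mc S} \big((u_n-u_*)(\xi)-(u_n-u_*)(\eta)\big)\big(v(\xi)-v(\eta)\big)\mc K(\eta^{-1}\xi)\,d\xi\,d\eta \\
& \qquad = \int_{\HH} \big(u_n(\xi)-u_*(\xi)\big)\,\mf V(\xi,\tilde{\xi})\,d\xi\,d\tilde{\xi},
\end{align*}
where $\mf V(\xi,\tilde{\xi}):=(2v(\xi)-v(\xi\tilde{\xi})-v(\xi\tilde{\xi}^{-1}))\mc K(\tilde{\xi})$. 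Because $u_n-u_*\in \mc Z_0$ we have $u_n=u_*$ a.e.\ on $\Om^c$, so in the right-hand integral the $\xi$-variable is effectively restricted to $\Om$.

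The crucial input, already obtained along the way in the proof of Proposition \ref{prophf1} (namely the estimate \eqref{hf26bis}), is that $\mf V\in L^1(\HH)$. Consequently
\begin{align*}
\left|\int_{\HH}(u_n(\xi)-u_*(\xi))\mf V(\xi,\tilde{\xi})\,d\xi\,d\tilde{\xi}\right|
\le \|u_n-u_*\|_{L^\infty(\Om)}\,\|\mf V\|_{L^1(\HH)},
\end{align*}
and the right-hand side tends to $0$ as $n\ra\infty$ by the assumed uniform convergence. This yields \eqref{hf27unu}.

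I do not foresee a serious obstacle: all the heavy analytic work (the symmetrization trick converting the $\mc S$-integral into an $\HH$-integral and, above all, the $L^1(\HH)$-estimate on $\mf V$) has been done in Proposition \ref{prophf1}. The only thing to check carefully is that the hypotheses of Proposition \ref{prophf1} are met for both $u_n$ and its limit $u_*$, which follows immediately from the uniform convergence giving uniform $L^\infty$-boundedness on $\H$.
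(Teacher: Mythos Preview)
Your approach is essentially the paper's: both routes reduce the difference of the $\mc S$-integrals to $\int_{\HH}(u_n-u_*)\mf V$ and then use $\mf V\in L^1(\HH)$ together with $\|u_n-u_*\|_{L^\infty}\to 0$.

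There is, however, one slip in your justification. You apply Proposition~\ref{prophf1} to $u_n$ and $u_*$ \emph{separately}, claiming that uniform convergence of $u_n$ to $u_*$ makes each of them lie in $L^\infty(\H)$ (hence in $L^\infty(\Om^c)$). That is not true: uniform convergence only gives $\|u_n-u_*\|_{L^\infty(\H)}\to 0$, i.e.\ it bounds the \emph{difference}, not the individual functions. Nothing in the hypotheses of the Lemma forces $u_*$ (or $u_n$) to be bounded on $\Om^c$, so the separate applications of Proposition~\ref{prophf1} are not a priori licensed.

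The paper avoids this by applying Proposition~\ref{prophf1} directly to $u_n-u_*$. Since $u_n-u_*\in\mc Z_0$, it vanishes a.e.\ on $\Om^c$, so the $L^\infty(\Om^c)$ hypothesis is satisfied trivially. From that point on your argument and the paper's coincide verbatim: the identity \eqref{hf27} with $u=u_n-u_*$ gives exactly the right-hand side you wrote, and the estimate $\le \|u_n-u_*\|_{L^\infty(\Om)}\|\mf V\|_{L^1(\HH)}$ finishes the proof. So the fix is a one-line change in how you invoke the Proposition.
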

\begin{proof}
	Using Proposition \ref{prophf1} to $u_n-u_*$,   for any $v \in C^2_0(\Om)$,  we deduce that
	\begin{align*}
	& \int_{\mc S} (u_n(\xi)-u_n(\eta)) (v(\xi)-v(\eta)) \mc K(\eta^{-1}\xi )d \xi  d \eta-   \int_{\mc S} (u_*(\xi)-u_*(\eta)) (v(\xi)-v(\eta)) \mc K(\eta^{-1}\xi )d \xi  d \eta\\
	&=  \int_{\mc S} ((u_n-u_*)(\xi)-(u_n-u_*)(\eta)) (v(\xi)-v(\eta)) \mc K(\eta^{-1}\xi )d \xi  d \eta\\
	& = \int_{\mc S} (u_n-u_*)(\xi)(2v(\xi)-v(\xi \eta)- v(\xi\eta^{-1})) \mc K(\eta^{-1}\xi )d \xi  d \eta.
	\end{align*}
	Since $ u_n\ra u_*$ uniformly in $\H$, we get
		\begin{align*}
	\lim_{n\ra \infty}\bigg| \int_{\mc S} (u_n-u_*)(\xi)(2v(\xi)-v(\xi \eta)- v(\xi\eta^{-1})) \mc K(\eta^{-1}\xi )d \xi  d \eta\bigg|\\
	 \leq \lim \|u_n-u_*\|_{L^\infty(\Om)} \|\mf V\|_{L^1(\HH)}=0
	\end{align*}
	where $\mf V$ is defined in Proposition \ref{prophf1}. Hence \eqref{hf27unu} holds true. \QED
\end{proof}

\textbf{Proof of Theorem \ref{thmhf1}: }
 Let $u$ be a solution to variational inequality \eqref{hf28}. In the framework  of Theorem \ref{thmhf1}, we now apply Theorem  \ref{thmhf3} and deduce that \eqref{hf29} holds. \QED

\section{Proof of Theorem \ref{thmhf2}}
In this Section, We proove trhe existence of mountain pass solution of the problem. Notice that the problem $(\mc P)$ has a variational Structure and the  energy functional associated to the problem $(\mc P)$ is given as
\begin{align*}
\mc H(u)=\frac12\int_{\mc S} |u(\xi)-u(\eta)|^2\mc K(\eta^{-1}\xi)~d\xi d\eta-\int_{\Om} F(\xi,u(\xi))~dx.
\end{align*}
Observe that $\mc H \in C^1(\mc Z_0,\mathbb{R})$ and for $u \in \mc Z_0$ and for any $ v \in \mc Z_0$,
\begin{align*}
\ld \mc H^\prime(u), v \rd = \int_{\mc S} (u(\xi)-u(\eta)) (v(\xi)-v(\eta)) \mc K(\eta^{-1}\xi )~d \xi  d \eta -\int_{\Om} f(\xi,u(\xi))~d\xi
\end{align*}
\begin{Remark}\label{remhf1}
	Under the assumptions \eqref{hf30}, for any $\e>0$ there exists $\de >0$ such that a.e. $\xi \in \Om$ and for any $ l \in \mathbb{R}$,  we have
	\begin{align}\label{hf31}
	|f(\xi,l)| \leq 2\e|l| +q\de|l|^{q-1} \text{ and }  |F(\xi,l)| \leq \e|l|^2 +\de|l|^{q}.
	\end{align}
	 Also, there exist two positive functions $m$ and $M$ belonging in $L^\infty(\Om)$ such that a.e. $\xi \in \Om$ and for any $l \in \mathbb{R}$,
	 \begin{align}\label{hf32}
	 F(\xi,l)\geq m(\xi)|l|^{\vartheta}-M(\xi).
	 \end{align}
\end{Remark}

\begin{Proposition}\label{prophf3}
	The following holds
	\begin{enumerate}
		\item [(i)] There exists $\al,\rho>0$ such that $\mc H(u)\geq \al$ for  $\|u\|_{\mc Z_0}= \rho$.	
		\item [(ii)] There exists  $e \in \mc Z_0$  such that $e\geq 0$ a.e. in $\H,~\|e\|>\rho$ and  $\mc H(e)<\beta$.
	\end{enumerate}
\end{Proposition}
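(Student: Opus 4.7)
This is the standard mountain pass geometry verification for a subcritical Ambrosetti-Rabinowitz-type problem, adapted to the fractional Heisenberg setting. The approach is to use the growth estimates collected in Remark~\ref{remhf1} together with the Sobolev-type embeddings in Lemma~\ref{lemhf1}(iii). The underlying functional analysis is exactly as in the Euclidean subcritical case; the Heisenberg structure enters only through the choice of norm on $\mc Z_0$ and the embedding constant from \cite{arka}.

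For part (i), I plan to use the estimate $|F(\xi,l)|\leq \e|l|^2+\de|l|^q$ from \eqref{hf31}. Integrating over $\Om$ gives
\begin{equation*}
\int_{\Om} F(\xi,u(\xi))\,d\xi \leq \e\|u\|_{L^2(\Om)}^2+\de\|u\|_{L^q(\Om)}^q.
\end{equation*}
Since $\Om$ is bounded and $2\leq q<Q^*$, Lemma~\ref{lemhf1}(iii) and H\"older's inequality give continuous embeddings $\mc Z_0\hookrightarrow L^2(\Om)$ and $\mc Z_0\hookrightarrow L^q(\Om)$, with constants $C_1,C_2>0$. Therefore
\begin{equation*}
\mc H(u)\geq \tfrac12\|u\|_{\mc Z_0}^2-\e C_1\|u\|_{\mc Z_0}^2-\de C_2\|u\|_{\mc Z_0}^q=\left(\tfrac12-\e C_1\right)\|u\|_{\mc Z_0}^2-\de C_2\|u\|_{\mc Z_0}^q.
\end{equation*}
Fixing $\e$ small enough that $\tfrac12-\e C_1>0$ and using $q>2$, the quadratic term dominates the higher-order term in a small neighborhood of the origin, so one can choose $\rho>0$ small and set $\al:=(\tfrac12-\e C_1)\rho^2-\de C_2\rho^q>0$ to conclude $\mc H(u)\geq \al$ whenever $\|u\|_{\mc Z_0}=\rho$.

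For part (ii), I will exploit the Ambrosetti-Rabinowitz-type lower bound \eqref{hf32}. Pick any non-negative $u_0\in \mc Z_0$ with $u_0\not\equiv 0$ (for example a smooth non-negative bump supported in $\Om$, whose existence follows from $C_c^\infty(\Om)\subset \mc Z_0$). For $t>0$, using \eqref{hf32},
\begin{equation*}
\mc H(tu_0)\leq \tfrac{t^2}{2}\|u_0\|_{\mc Z_0}^2-t^{\vartheta}\int_{\Om}m(\xi)|u_0(\xi)|^{\vartheta}\,d\xi+\int_{\Om}M(\xi)\,d\xi.
\end{equation*}
Since $m>0$ on a set of positive measure (as $m\in L^\infty(\Om)$ is positive) and $u_0\not\equiv 0$, the coefficient of $t^{\vartheta}$ is strictly positive; because $\vartheta>2$, we have $\mc H(tu_0)\to -\infty$ as $t\to\infty$. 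Choosing $t^*$ large enough so that $t^*\|u_0\|_{\mc Z_0}>\rho$ and $\mc H(t^*u_0)<\be$ (or, without loss of generality, $<0$) and setting $e:=t^*u_0\geq 0$ a.e.\ yields the required function.

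The only mildly delicate step is part (i), where one must be careful in chaining the embeddings: the natural embedding from Lemma~\ref{lemhf1}(iii) gives $\mc Z_0\hookrightarrow L^{Q^*}(\Om)$, and since $\Om$ is bounded, H\"older's inequality upgrades this to $\mc Z_0\hookrightarrow L^p(\Om)$ for every $1\leq p\leq Q^*$; in particular for $p=2$ and $p=q$. Beyond this, both claims reduce to elementary manipulations of real-valued functions of $t$, and no further obstacle is anticipated.
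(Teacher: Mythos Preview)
Your proposal is correct and follows essentially the same approach as the paper's proof: both parts use the growth bounds from Remark~\ref{remhf1} (namely \eqref{hf31} for (i) and \eqref{hf32} for (ii)) combined with the Sobolev-type embedding of Lemma~\ref{lemhf1}(iii) and H\"older's inequality on the bounded domain $\Om$. Your treatment is in fact slightly more careful in part (ii), since you explicitly choose a non-negative test function $u_0$ to ensure $e\geq 0$, a detail the paper's proof leaves implicit.
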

\begin{proof}
	From  H\"older's inequality, Lemma \ref{lemhf1}, and  \eqref{hf31}, we deduce that
	\begin{equation*}
	\begin{aligned}
	\mc H(u)& \geq \frac12 \|u\|^2_{\mc Z_0}-\e\int_{\Om} |u|^2~d\xi-\de\int_{\Om} |u|^q~d\xi\\
	& \geq  \frac12 \|u\|^2_{\mc Z_0} -\e |\Om|^{(Q^*-2)/Q^*} \|u\|_{L^{Q^*}(\Om)}^2-\de |\Om|^{(Q^*-q)/Q^*}\|u\|_{L^{Q^*}(\Om)}^q\\
	& \geq \left(\frac12 -\e |\Om|^{(Q^*-2)/Q^*}c(Q,s) c^2(\mu) \right) \|u\|^2_{\mc Z_0}- \de |\Om|^{(Q^*-q)/Q^*}(c(Q,s) c^2(\mu))^{q/2}\|u\|^q_{\mc Z_0}.
	\end{aligned}
	\end{equation*}
	Choose $\e$ such that $\e |\Om|^{(Q^*-2)/Q^*}c(Q,s) c^2(\mu)<\frac12 $. It implies that
		\begin{equation*}
	\begin{aligned}
	\mc H(u)& \geq \ba  \|u\|^2_{\mc Z_0} -\kappa \|u\|^{q}_{\mc Z_0}.
	\end{aligned}
	\end{equation*}
	Since $q>2$, so we can choose $\al,\rho>0$ such that $\J(u)\geq \al$ for  $\|u\|_{\mc Z_0}= \rho$. \\
(ii)  Let $u \in \mc Z_0$ and $ j>0$.  From \ref{hf32}, we obtain
\begin{align*}
\mc H(ju)& = \frac{j^2}{2} \|u\|_{\mc Z_0}^2 - \int_{\Om} F(\xi,ju(\xi))~dx \\
& \leq  \frac{j^2}{2} \|u\|_{\mc Z_0}^2 - j^{\vartheta}\int_{\Om}  m(\xi)|u(\xi)|^{\vartheta} ~d\xi+ \int_{\Om} M(\xi)~d\xi\ra -\infty
\end{align*}
as $j \ra \infty$. Let $e= ju$ for $j$ large  then (ii) follows. \QED
\end{proof}

\begin{Proposition}\label{prophf4}
	Let $u_n$ be a sequence in $\mc Z_0$ such that  $\mc H(u_n) \ra c $ and $\|\mc H^\prime(u_n)\|\ra 0$ as $n \ra \infty$. Then there exists $u_* \in X_0$ such that, up to subsequence, $u_n \ra u_*$ in $\mc Z_0$.
\end{Proposition}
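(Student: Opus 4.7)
\textbf{Proof proposal for Proposition \ref{prophf4}.}

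The plan is the standard two-step Palais--Smale recipe for subcritical problems on a Hilbert space: first establish boundedness of $\{u_n\}$ in $\mc Z_0$ via the Ambrosetti--Rabinowitz condition, then upgrade weak convergence to strong convergence using the compact embedding from Lemma \ref{lemhf2} together with the subcritical growth of $f$.

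\emph{Step 1 (boundedness).} I would compute $\vartheta \mc H(u_n) - \langle \mc H'(u_n), u_n\rangle$, which equals
\[
\Bigl(\tfrac{\vartheta}{2}-1\Bigr)\|u_n\|_{\mc Z_0}^2 + \int_\Om\bigl[f(\xi,u_n)u_n - \vartheta F(\xi,u_n)\bigr]\,d\xi.
\]
The third hypothesis in \eqref{hf30} ensures $f(\xi,l)l - \vartheta F(\xi,l)\ge 0$ when $|l|>\mc R$, and the integrand is bounded on $\{|l|\le\mc R\}$ by a constant $C_0$ depending on $\mc R$ and the local bound from the first hypothesis in \eqref{hf30}. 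Combining this with $\mc H(u_n)\to c$ and $|\langle \mc H'(u_n),u_n\rangle|\le \|\mc H'(u_n)\|_*\|u_n\|_{\mc Z_0}$, one gets
\[
\Bigl(\tfrac{\vartheta}{2}-1\Bigr)\|u_n\|_{\mc Z_0}^2 \le \vartheta c + o(1) + \|u_n\|_{\mc Z_0}\cdot o(1) + C_0|\Om|,
\]
which yields boundedness of $\{u_n\}$ in $\mc Z_0$ since $\vartheta>2$.

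\emph{Step 2 (weak and strong limits).} Since $\mc Z_0$ is a Hilbert space, a subsequence (still denoted $u_n$) satisfies $u_n\rp u_*$ in $\mc Z_0$. By Lemma \ref{lemhf2}, up to a further subsequence, $u_n\to u_*$ in $L^r(\H)$ for every $r\in[1,Q^*)$ and a.e.\ in $\H$. Because $q\in(2,Q^*)$, the convergence holds in both $L^2(\Om)$ and $L^q(\Om)$. Using the growth estimate \eqref{hf31} and H\"older's inequality,
\[
\Bigl|\int_\Om f(\xi,u_n)(u_n-u_*)\,d\xi\Bigr|\le 2\e\|u_n\|_{L^2(\Om)}\|u_n-u_*\|_{L^2(\Om)} + q\de\|u_n\|_{L^q(\Om)}^{q-1}\|u_n-u_*\|_{L^q(\Om)},
\]
which tends to zero by Step 1 and the strong $L^2,L^q$ convergence.

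\emph{Step 3 (closing the loop).} Testing $\mc H'(u_n)$ against $u_n-u_*\in\mc Z_0$ gives
\[
o(1) = \langle \mc H'(u_n), u_n-u_*\rangle = \langle u_n, u_n-u_*\rangle_{\mc Z_0} - \int_\Om f(\xi,u_n)(u_n-u_*)\,d\xi.
\]
The nonlinear term vanishes by Step 2. For the first term, weak convergence in the Hilbert space $\mc Z_0$ yields $\langle u_n, u_*\rangle_{\mc Z_0}\to\|u_*\|_{\mc Z_0}^2$, so $\|u_n\|_{\mc Z_0}^2\to\|u_*\|_{\mc Z_0}^2$. Combined with $u_n\rp u_*$ in the Hilbert space $\mc Z_0$, this forces $u_n\to u_*$ strongly in $\mc Z_0$.

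\emph{Main obstacle.} The only delicate point is the convergence $\int_\Om f(\xi,u_n)(u_n-u_*)\,d\xi\to 0$: this is where the strictly subcritical exponent $q<Q^*$ is essential, since it is exactly the range in which Lemma \ref{lemhf2} gives compact embedding $\mc Z_0\hookrightarrow L^q(\Om)$. Boundedness of $\{u_n\}$ via the Ambrosetti--Rabinowitz condition is routine once one writes the correct linear combination of $\mc H(u_n)$ and $\langle\mc H'(u_n),u_n\rangle$.
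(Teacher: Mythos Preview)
Your proof is correct and follows essentially the same route as the paper: boundedness via the Ambrosetti--Rabinowitz combination $\mc H(u_n)-\tfrac{1}{\vartheta}\langle\mc H'(u_n),u_n\rangle$, then compact embedding (Lemma \ref{lemhf2}) plus subcritical growth to pass to the limit in the nonlinear term, and finally norm convergence together with weak convergence to conclude. The only cosmetic differences are that the paper tests $\mc H'(u_n)$ separately against $u_n$ and $u_*$ (and uses dominated convergence with an $L^q$ majorant) rather than directly against $u_n-u_*$ with H\"older's inequality as you do.
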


\begin{proof}
	First we prove that $u_n$ is a bounded sequence in $\mc Z_0$. Using  \eqref{hf31} with $\e=1$, we get
	\begin{align}\label{hf33}
\bigg| \int_{\Om\cap \{|u_n|\leq \mc R \} }  \left( F(\xi,u_n(\xi))-\frac{1}{\vartheta} f(\xi,u_n(\xi))u_n(\xi)\right)d\xi \bigg| \leq \left(\mc R^2+\de \mc R^q+\frac{2}{\vartheta}\mc R+\frac{q}{\vartheta}\de \mc R^{q-1} \right) |\Om|.
	\end{align}
	Furthermore from the assumptions on the sequence $\mc Z_0$, we can choose $\tilde{c}>0$ such that
	\begin{align}\label{hf34}
|\mc H(u_n)| \leq \tilde{c} \text{ and } \bigg| \bigg\ld \mc H^\prime(u_n), \frac{u_n}{\|u_n\|_{\mc Z_0}}\bigg\rd \bigg|\leq \tilde{c}
	\end{align}
	Taking into account \eqref{hf30}, \eqref{hf33}, and \eqref{hf34}, we conclude
	\begin{align*}
	\tilde{c}(1+\|u\|_{\mc Z_0})& \geq \mc H(u_n)-\frac{1}{\vartheta} \ld  \mc H^\prime(u_n),u_n \rd \\
	& \geq \left(\frac12-\frac{1}{\vartheta}\right)\|u\|^2_{\mc Z_0}-  \int_{\Om\cap \{|u_n|\leq \mc R \} }  \left( F(\xi,u_n(\xi))-\frac{1}{\vartheta} f(\xi,u_n(\xi))u_n(\xi)\right)d\xi \\
	& \geq \left(\frac12-\frac{1}{\vartheta}\right)\|u\|^2_{\mc Z_0}-c_1.
	\end{align*}
	It implies that $u_n$ is a bounded sequence in $\mc Z_0$. Since $\mc Z_0$ is a Hilbert space. So up to a subsequence there exists $u_* \in \mc Z_0$ such that $u_n \rp u_*$  weakly in $\mc Z_0$. From Lemma \ref{lemhf2}, we have
	\begin{equation}\label{hf35}
	\begin{aligned}
& 	u_n \ra u_*  \text{ in } L^q(\H);~ u_n \ra u_*\text{  a.e  in  } \H;\\
	& \text{  there exists } w\in L^q(\H) \text{  such that  } |u_n(\xi)|\leq w(\xi) \text{  a.e  in  } \H \text{ and } n \in \mathbb{N}
	\end{aligned}
	\end{equation}
Taking into account \eqref{hf30}, \eqref{hf35}, and  dominated convergence theorem, we get
\begin{equation}\label{hf36}
\begin{aligned}
& \int_{\Om} f(\xi,u_n(\xi))u_n(\xi)~d\xi\ra \int_{\Om} f(\xi,u_*(\xi))u_*(\xi)~d\xi;\\
&  \int_{\Om} f(\xi,u_n(\xi))u_*(\xi)~d\xi\ra \int_{\Om} f(\xi,u_*(\xi))u_*(\xi)~d\xi
	\end{aligned}
\end{equation}
Taking into account the fact that $\|\mc H^\prime(u_n)\|\ra 0$ as $n \ra \infty$ and \eqref{hf36}, we deduce that
\begin{equation}\label{hf37}
\begin{aligned}
& \|u_n\|^2_{\mc Z_0}\ra \int_{\Om} f(\xi,u_*(\xi))u_*(\xi)~d\xi;\\
&   \ld u_n,u_*\rd_{\mc Z_0} - \int_{\Om} f(\xi,u_n(\xi))u_*(\xi)~d\xi = \ld \mc H^\prime(u_n),u_*\rd \ra 0 \text{ as } n \ra \infty.
\end{aligned}
\end{equation}
From \eqref{hf35}, \eqref{hf36}, and \eqref{hf37}, we get
\begin{equation*}
\begin{aligned}
\|u_n\|^2_{\mc Z_0}\ra \|u_n\|^2_{\mc Z_0}\text{ as } n \ra \infty.
\end{aligned}
\end{equation*}
Hence $\|u_n-u_*\|^2_{\mc Z_0}= \|u_n\|^2_{\mc Z_0}+\|u_*\|^2_{\mc Z_0} -2 \ld u_n,u_*\rd_{\mc Z_0} \ra 0  \text{ as } n \ra \infty. $
This completes the proof. \QED
\end{proof}
\textbf{Proof of Theorem \ref{thmhf2}: }  Using Mountain Pass Theorem along with Propositions \ref{prophf3} and \ref{prophf4}, there exists a critical point $u_0 \in \mc Z_0$. Also, $\mc H(u_0) \geq \al >0= \mc H(0)$. It implies $u \not \equiv 0$. \QED


\begin{thebibliography}{11}
		\bibitem{valdi} N. Abatangelo and E. Valdinoci, {\it Getting acquainted with the fractional Laplacian}, In Contemporary Research in Elliptic PDEs and Related Topics, Springer, Cham (2019),   1-105.
	\bibitem{arka} Adimurthi and A. Mallick, {\it A Hardy type inequality on fractional order Sobolev spaces on the Heisenberg group}, Annali della Scuola Normale Superiore di Pisa. Classe di scienze {\bf 18} (2018), no. 3, 917-949.
	\bibitem{DS} D.  Goel and K. Sreenadh, Existence and nonexistence results for Kohn Laplacian with Hardy-Littlewood-Sobolev critical exponents, Journal of Mathematical analysis and Appliations, {\bf 486}, (2020), no. 2, 123915.
	\bibitem{servadei} G. Molica Bisci, V.  D. Radulescu, and R. Servadei,  Variational methods for nonlocal fractional problems {\bf 162},  Cambridge University Press, 2016.
	
	\bibitem{caffe}L. Caffarelli and L.  Silvestre, {\it An extension problem related to the fractional Laplacian},  Communications in partial differential equations {\bf 32}  (2007), no. 8, 1245-1260.
	
	
	\bibitem{ele}E. Cinti and J. Tan, {\it A nonlinear Liouville theorem for fractional equations in the Heisenberg group},  Journal of Mathematical Analysis and Applications {\bf 433} (2016), no. 1,  434-454.
	\bibitem{frank}R. L. Frank, M. M. Gonz\'alez, D. D. Monticelli, and J. Tan {\it An extension problem for the CR fractional Laplacian},  Advances in mathematics {\bf 270} (2015),  97-137.
\bibitem{guidi}	C. Guidi,  A. Maalaoui, and V.  Martino. {\it Palais–Smale sequences for the fractional CR Yamabe functional and multiplicity results},   Calculus of Variations and Partial Differential Equations  {\bf 57} (2018), no. 6, 152.
		\bibitem{hei}W. Heisenberg, {\it The physical principles of the quantum theory}, Courier Corporation (1949).
	
	\bibitem{liu}Y. Liu, Y. Wang, and J. Xiao, {\it Nonnegative solutions of a fractional sub-Laplacian differential inequality on Heisenberg group}, Dynamics of Partial Differential Equations {\bf 12} (2015) , no. 4,  379-403.
	
	
	
	\bibitem{thang} L. Roncal and S. Thangavelu, {\it Hardy's inequality for fractional powers of the sublaplacian on the Heisenberg group},  Advances in Mathematics {\bf302}  (2016),  106-158.	
\bibitem{ser} R. Servadei and E.  Valdinoci, {\it Mountain pass solutions for non-local elliptic operators},  Journal of Mathematical Analysis and Applications {\bf 389} (2012),  no. 2, 887-898.

		\bibitem{stinga} P. R. Stinga, {\it User's guide to the fractional Laplacian and the method of semigroups},  Fractional Differential Equations (2019),  235-266.






	
	

	
	
\end{thebibliography}
\end{document}